\newcommand{\R}{\mathbb{R}}
\newcommand{\Z}{\mathbb{Z}}
\numberwithin{equation}{section}
\theoremstyle{plain}
\newtheorem{thm}[equation]{Theorem}
\newtheorem{lem}[equation]{Lemma}
\newtheorem{cor}[equation]{Corollary}
\theoremstyle{definition}
\theoremstyle{remark}
\newtheorem{rem}[equation]{Remark}
\newcommand{\pair}[2]{\langle #1,#2 \rangle}
\newcommand{\ave}[1]{\langle #1 \rangle}
\title{Upper Bound for Multi-parameter Iterated Commutators}
\author{Laurent Dalenc}
\address{Laurent Dalenc, Universit\'e Paul Sabatier, France}
\email{laurent.dalenc@math.univ-toulouse.fr}
\thanks{}
\author{Yumeng Ou}
\address{Yumeng Ou, Department of Mathematics, Brown University, Providence, RI, USA}
\email{yumeng\_ou@brown.edu}
\thanks{The second author is supported by the NSF Grant DMS-0901139}
\subjclass[2010]{42B20}
\keywords{Iterated commutators, Product BMO, Multi-parameter paraproducts, Dyadic shifts}
\begin{document}
\maketitle

\begin{abstract}
We show that the product BMO space can be characterized by iterated commutators of a large class of Calder\'on-Zygmund operators. This result follows from a new proof of boundedness of iterated commutators in terms of the BMO norm of their symbol functions, using Hyt\"onen's representation theorem of Calder\'on-Zygmund operators as averages of dyadic shifts. The proof introduces some new paraproducts which have BMO estimates.
\end{abstract}

\section{Introduction}

In \cite{LPPW} the product BMO space on $\R^{d_1}\otimes\cdots\otimes\R^{d_t}$ was characterized by the multi-parameter iterated commutators of Riesz transforms. This extended to the product setting the classical results of R. Coifman, R. Rochberg and G. Weiss \cite{CRW}, a characterization of classical BMO in terms of boundedness on $L^2(\R^d)$ of the commutator of a singular integral operator with a multiplication operator, which by duality also implies a weak factorization result of $H^1(\R^d)$.

In the multi-parameter setting, let $M_b$ be the operator of pointwise multiplication by $b\in BMO_{prod}(\R^{\vec{d}})$. Let $T_i$ be the Calder\'on-Zygmund operators on $\R^{d_i}$. One seeks to characterize product BMO in terms of commutators in the sense that
\[
\|b\|_{BMO_{prod}}\lesssim\|[\ldots[[M_b,T_1],T_2]\ldots,T_t]\|_{L^2\rightarrow L^2}\lesssim\|b\|_{BMO_{prod}}
\]
where the first and second inequality will be referred to as lower bound and upper bound, respectively. 

In the case of Hilbert transform, the above result in bi-parameter setting was proved by M. Lacey and S. Ferguson in \cite{LF}, where the upper bound was first shown by S. Ferguson and C. Sadosky \cite{FS}. M. Lacey and E. Tervilleger \cite{LT} then extended the result to the multi-parameter setting. The Riesz transform result was proved by M. Lacey, S. Petermichl, J. Pipher and B. Wick in \cite{LPPW}, where they obtained a more general upper bound result for any Calder\'on-Zygmund operators of convolution type with high degree of smoothness. Later on in \cite{LPPW2} they simplified the proof of the upper bound for Riesz transforms by means of dyadic shifts. Very recently, the first author and S. Petermichl \cite{DP} proved the lower bound for a larger class of Calder\'on-Zygmund operators satisfying certain criteria. 

In this paper, we prove the upper bound for any given collection of Calder\'on-Zygmund operators. As a corollary, we prove new characterizations of product BMO in terms of commutators of Calder\'on-Zygmund operators.

The main theorem of the paper is the following.

\begin{thm}\label{main}
Let $b\in BMO_{prod}(\R^{\vec{d}})$ and $(T_i)_{1\leq i\leq t}$ be a collection of Calder\'on-Zygmund operators, with each $T_i$ acting on parameter $i$ of $\R^{\vec{d}}=\R^{d_1}\otimes\cdots\otimes\R^{d_t}$. Then,
\[
\|[\ldots[[M_b,T_1],T_2]\ldots,T_t]\|_{L^2\rightarrow L^2}\leq C\|b\|_{BMO_{prod}}
\]
where $C$ depends only on $\vec{d}$ and $\prod_{i=1}^t\|T_i\|_{CZ}$.
\end{thm}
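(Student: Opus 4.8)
The plan is to combine Hyt\"onen's representation theorem with a paraproduct decomposition of the iterated commutator of dyadic shifts. \emph{Reduction to dyadic shifts.} By the representation theorem, each $T_i$ is an average over random dyadic grids $\mathcal D_{\omega_i}$ on $\R^{d_i}$ of dyadic shifts $\mathbb S^{m_i,n_i}_{\omega_i}$ of complexity $(m_i,n_i)$, with coefficients that are summable in $(m_i,n_i)$ and decay exponentially in $\max(m_i,n_i)$. Since the iterated commutator is linear in each $T_i$ separately, and a shift acting on parameter $i$ commutes with one acting on a different parameter $j$, one may pull the expectations and sums outside the $L^2$ pairing and reduce Theorem~\ref{main} to a uniform estimate
\[
\bigl\|[\ldots[[M_b,\mathbb S^1],\mathbb S^2]\ldots,\mathbb S^t]\bigr\|_{L^2\to L^2}\ \leq\ C(\vec d)\,\Bigl(\prod_{i=1}^t P(m_i,n_i)\Bigr)\,\|b\|_{BMO_{prod}},
\]
in which each $\mathbb S^i$ is a dyadic shift of complexity $(m_i,n_i)$ on parameter $i$ with respect to a fixed product dyadic structure $\mathcal D^1\times\cdots\times\mathcal D^t$ on $\R^{\vec d}$ and $P$ grows at most polynomially. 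The exponential decay coming from the representation theorem beats this polynomial loss, so that summing over the complexities and averaging over the grids yields the theorem.

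\emph{Commutator expansion and paraproducts.} Since the $\mathbb S^i$ commute across parameters,
\[
[\ldots[[M_b,\mathbb S^1],\mathbb S^2]\ldots,\mathbb S^t]=\sum_{A\subseteq\{1,\ldots,t\}}(-1)^{|A|}\Bigl(\prod_{i\notin A}\mathbb S^i\Bigr)\,M_b\,\Bigl(\prod_{i\in A}\mathbb S^i\Bigr).
\]
For each $A$, insert the multi-parameter product rule for $M_b$: decomposing the pointwise product $bg$ against the tensor Haar basis means, in each parameter $i$, either testing a Haar frequency of $b$ against an average of $g$, or the reverse, or a diagonal term, which produces $3^t$ ``mixed'' bilinear pieces. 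Composing each such piece with the building blocks of the shifts (each $\mathbb S^i$ being a sum over dyadic cubes $K$ of rank-one operators $f\mapsto a_{I,J,K}\langle f,h_I\rangle h_J$ with $I,J\subseteq K$ at the two prescribed generations and $L^\infty$-normalized coefficients), and then regrouping the resulting terms over the $2^t$ sets $A$ so as to exploit the cancellation built into the commutator, one is left with a finite list of operators whose coefficients involve only the Haar coefficients $\langle b,h_R\rangle$ of $b$ over dyadic rectangles $R=Q_1\times\cdots\times Q_t$ and the bounded shift coefficients; the polynomial loss $P(m_i,n_i)$ enters here, because averages of $b$ over two cubes at distance $\leq\max(m_i,n_i)$ must be compared through a telescoping of that many of its Haar coefficients. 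These are the new paraproducts of the abstract, and it remains to bound each by $\|b\|_{BMO_{prod}}$, with polynomial loss in the complexities.

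\emph{BMO estimates.} When $b$ lies ``below'' $g$ in every parameter the operator is, up to a harmless perturbation, the classical multi-parameter dyadic paraproduct, whose $L^2\to L^2$ bound is the rectangular Carleson condition and hence $\|b\|_{BMO_{prod}}$ by Journ\'e's lemma. I expect the main obstacle to be the \emph{mixed} paraproducts associated with a proper nonempty $A$: there $g$'s frequency sits above $b$'s average in the parameters indexed by $A$ and below it in the rest, so the operator is neither a paraproduct nor the adjoint of one, and a direct estimate only yields the stronger (and in general false) bound by a vector-valued BMO norm of $b$. To recover the product BMO norm one transposes in the parameters of $A$, recoupling the coefficients $\langle b,h_R\rangle$ into a single square sum $\sum_{R\subseteq\Omega}|\langle b,h_R\rangle|^2/|R|$ over dyadic rectangles contained in an arbitrary open set $\Omega\subseteq\R^{\vec d}$ of finite measure; this sum is $\lesssim|\Omega|\,\|b\|_{BMO_{prod}}^2$ precisely by the embedding/Carleson characterization of product BMO, whose proof in turn uses Journ\'e's lemma because the rectangle families occurring here are not Cartesian products of one-dimensional families. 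The remaining terms are dominated by multi-parameter dyadic square functions and controlled through the Fefferman--Stein inequality on $\R^{\vec d}$. Assembling these polynomial-in-complexity bounds and summing against the exponential decay from the first step completes the proof.
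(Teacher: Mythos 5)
Your overall strategy matches the paper's: invoke Hyt\"onen's representation theorem to reduce to dyadic shifts, decompose the commutator of shifts into paraproduct-type operators with a loss that is at most polynomial (in fact linear) in the complexity, establish uniform BMO-type bounds for these operators via multi-parameter square-function and hybrid maximal-square-function arguments, and let the exponential factor $2^{-\max(m_i,n_i)\delta/2}$ absorb the polynomial loss. The way you organize the decomposition differs in form (binomial expansion over $A\subseteq\{1,\dots,t\}$ followed by the Haar product rule, rather than the paper's iterated one-parameter decomposition into operators $B_k$ and $P$), but these amount to different bookkeeping of essentially the same Haar sums, so that part is fine in principle.

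There is, however, a genuine gap. Your model for the shifts---``rank-one operators $f\mapsto a_{I,J,K}\langle f,h_I\rangle h_J$\dots with $L^\infty$-normalized coefficients''---only covers the \emph{cancellative} shifts. Hyt\"onen's theorem also produces, for each $T_i$, a possibly \emph{noncancellative} shift $S^{00}_{\omega_i}$: a dyadic paraproduct with a BMO symbol $a^i$ whose coefficients $\langle a^i,h_I\rangle|I|^{-1/2}$ satisfy only a Carleson condition, not an $\ell^\infty$ bound, because this block encodes the nontrivial $T_i 1$ and $T_i^*1$. Your decomposition and regrouping scheme never accounts for this, and that is not a small omission: the commutator with the noncancellative shift does \emph{not} reduce to paraproducts of $b$ alone. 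In one parameter it produces the trilinear operator
\[
P(b,a,f)=\sum_I\langle b,h_I\rangle\langle f,h_I\rangle\,|I|^{-1}\sum_{J\subsetneq I}\langle a,h_J\rangle h_J
\]
and its adjoint, and in several parameters it produces their tensor combinations $PP$, $PP_1$, $BP_k$, $PB_l$, which mix the symbol $b$ with the shift symbols $a^i$. Bounding these (in particular the partial adjoint $PP_1$, whose boundedness does not follow from that of $PP$) is the technical heart of the paper and requires John--Nirenberg in the product setting together with hybrid maximal-square functions; none of this is anticipated in your plan. A secondary issue is that ``regrouping so as to exploit the cancellation built into the commutator'' is precisely the step that removes the illegal paraproduct (where $b$ would be paired against a noncancellative Haar function in every parameter), and it needs to be made explicit: in one parameter the cancellation is the vanishing of $[h_I,S^{ij}]h_J$ whenever $I\supsetneq J^{(i)}$, because $h_I$ is then constant on the relevant support. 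Without pinning this down, your decomposition would include an unbounded term. The remainder of your BMO estimates---Carleson/square function/Fefferman--Stein---is consistent with the paper's Lemmas, though the paper does not need Journ\'e's lemma; the bi-parameter John--Nirenberg inequality of Chang--Fefferman suffices.
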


One of the interesting results implied directly by the theorem is that a perturbation of a collection of operators characterizing product BMO still characterizes product BMO. In other words, characterizing families such as the Riesz transforms are stable under small perturbations in the sense that the Calder\'on-Zygmund operator norm of the perturbation terms are small. We organize this observation into the following corollary. 

\begin{cor}\label{cor}
Let $(T_{i, s_i})_{1\leq i\leq t, 1\leq s_i\leq n_i}$ be a family of Calder\'on-Zygmund operators characterizing the space $BMO_{\text{prod}}(\R^{\vec{d}})$, that is, $\exists C_1, C_2>0$, such that
\[
C_1\|b\|_{BMO_{prod}}\leq \sup_{1\leq i\leq t, 1\leq s_i\leq n_i}\|[\ldots[[M_b,T_{1,s_1}],T_{2,s_2}]\ldots,T_{t,s_t}]\|_{L^2\rightarrow L^2}\leq C_2\|b\|_{BMO_{prod}}.
\]
Then, $\exists \epsilon>0$ such that for any family of Calder\'on-Zygmund operators $(T'_{i,s_i})_{1\leq i\leq t,1\leq s_i\leq n_i}$ satisfying $\|T'_{i,s_i}\|_{CZ}\leq \epsilon$, the family $(T_{i,s_i}+T'_{i,s_i})_{1\leq i\leq t, 1\leq s_i\leq n_i}$ still characterizes $BMO_{prod}(\R^{\vec{d}})$.
\end{cor}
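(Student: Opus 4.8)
The plan is to combine the new upper bound (Theorem \ref{main}) with the assumed characterization hypothesis via a straightforward perturbation argument on the operator norm of the iterated commutator. First I would note that the map $(R_1,\dots,R_t)\mapsto [\dots[[M_b,R_1],R_2]\dots,R_t]$ is multilinear in the slots $R_i$; expanding $T_{i,s_i}+T'_{i,s_i}$ in each slot we obtain
\[
[\dots[[M_b,T_{1,s_1}+T'_{1,s_1}],\dots],T_{t,s_t}+T'_{t,s_t}] = \sum_{E\subseteq\{1,\dots,t\}}[\dots[[M_b,U_1],U_2]\dots,U_t],
\]
where $U_i=T_{i,s_i}$ if $i\notin E$ and $U_i=T'_{i,s_i}$ if $i\in E$. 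The term $E=\emptyset$ is the original commutator; every other term contains at least one factor with $CZ$ norm at most $\epsilon$, and since each $T_{i,s_i}$ is a fixed Calder\'on-Zygmund operator with some finite norm, Theorem \ref{main} applied to the collection $(U_i)_i$ bounds each such term by $C(\vec d)\,\epsilon\,\prod_{i\notin E}\|T_{i,s_i}\|_{CZ}\cdot\|b\|_{BMO_{prod}}$. Summing the $2^t-1$ error terms gives an estimate of the form $C'(\vec d, t, (T_{i,s_i}))\,\epsilon\,\|b\|_{BMO_{prod}}$ for $\epsilon\le 1$, say.

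Next I would choose $\epsilon$. The lower bound in the hypothesis says that for each $b$ there is a choice of indices $(i\mapsto s_i)$ realizing $\|[\dots[[M_b,T_{1,s_1}],\dots],T_{t,s_t}]\|_{L^2\to L^2}\ge C_1\|b\|_{BMO_{prod}}$ (up to the supremum). By the reverse triangle inequality on operator norms, the perturbed commutator with the same indices has norm at least $C_1\|b\|_{BMO_{prod}} - C'\epsilon\|b\|_{BMO_{prod}}$. Picking $\epsilon = C_1/(2C')$ yields the lower bound $\tfrac{C_1}{2}\|b\|_{BMO_{prod}} \le \sup_{i,s_i}\|[\dots[[M_b,T_{1,s_1}+T'_{1,s_1}],\dots],T_{t,s_t}+T'_{t,s_t}]\|_{L^2\to L^2}$. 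For the upper bound side, since each $T_{i,s_i}+T'_{i,s_i}$ is itself a Calder\'on-Zygmund operator (with norm at most $\|T_{i,s_i}\|_{CZ}+\epsilon$), a direct application of Theorem \ref{main} gives $\sup_{i,s_i}\|[\dots[[M_b,T_{1,s_1}+T'_{1,s_1}],\dots]]\|_{L^2\to L^2}\le C_2'\|b\|_{BMO_{prod}}$ with $C_2'$ depending only on $\vec d$ and the perturbed $CZ$ norms. This establishes that $(T_{i,s_i}+T'_{i,s_i})$ characterizes $BMO_{prod}(\R^{\vec d})$ with constants $C_1/2$ and $C_2'$.

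I expect the argument to be essentially routine once Theorem \ref{main} is in hand; the only genuinely substantive input is the upper bound for an arbitrary collection of Calder\'on-Zygmund operators, which is exactly what Theorem \ref{main} supplies and is what makes the error terms in the multilinear expansion controllable. The one point requiring a little care is bookkeeping of the constant $C'$: it depends on $t$, on $\vec d$, and on the fixed norms $\|T_{i,s_i}\|_{CZ}$, so $\epsilon$ depends on the original family but not on $b$, which is exactly what the statement asks for. A minor subtlety is that the supremum in the hypothesis is over all index choices, so one must be slightly careful that the index realizing (or nearly realizing) the lower bound may depend on $b$; this causes no difficulty since the perturbation estimate above is uniform in the indices.
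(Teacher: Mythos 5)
Your proposal is correct and follows exactly the route the paper intends (the paper states the corollary without writing out a proof, but its ensuing remarks about Calder\'on--Zygmund operators forming a linear space make clear this multilinearity-plus-perturbation argument is the intended one). The expansion over subsets $E$, the use of Theorem \ref{main} to make every error term $O(\epsilon)\|b\|_{BMO_{prod}}$, the choice $\epsilon=C_1/(2C')$ where $C'$ is taken as a maximum over the finitely many index tuples, and the reverse triangle inequality for the lower bound are all sound; your caveat about $b$-dependence of the maximizing index is a non-issue since the index set is finite and the perturbation estimate is uniform over it.
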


In particular, since Calder\'on-Zygmund operators form a linear space, whose norm can be made arbitrarily small by multiplying a small constant, it means that once we have a collection of operators characterizing BMO, we automatically obtain infinitely many collections of operators which also characterize BMO. More specifically, let $(T_{i, s_i})_{1\leq i\leq t, 1\leq s_i\leq n_i}$ be a family as in the corollary above, for any arbitrary family of Calder\'on-Zygmund operators $(T'_{i,s_i})_{1\leq i\leq t,1\leq s_i\leq n_i}$, there exist $\epsilon_1,\ldots,\epsilon_t>0$ such that for any $0<c_i<\epsilon_i,\,1\leq i\leq t$, the family $(T_{i,s_i}+c_iT'_{i,s_i})_{1\leq i\leq t, 1\leq s_i\leq n_i}$ characterizes $BMO_{prod}(\R^{\vec{d}})$.

The main tool in the proof of the main theorem is the representation theorem by T. Hyt\"onen \cite{Hy}, which states that any Calder\'on-Zygmund operator can be represented as an average of dyadic shift operators with respect to a probabilistic measure on a collection of dyadic grids. While the earliest version of this theorem appeared in \cite{Hy2}, here we choose to apply a slightly different one given in \cite{Hy}. In our proof, we will reduce the problem to the upper bound for commutators with dyadic shifts. This is the first use of Hyt\"onen's representation theorem to commutator theory. The novelty of this approach to the upper bound is twofold. First, the commutators with dyadic shifts which have infinite complexity in our case, are carefully studied and effectively reduced to paraproducts and another class of bounded operators. In contrast to typical methods dealing with multi-parameter theory, this allows our argument to be iterated. Second, new paraproducts and a similar type of operators are introduced, and this is where the delicate estimates in product theory are required.

The paper is organized as follows. In Section 2, we recall several preliminary results on dyadic shifts, representation theorem and multi-parameter paraproducts. In Section 3, a full proof of the main theorem in its one-parameter case is introduced, while the proof of the main theorem in arbitrarily many parameters is presented in Section 4.

\section*{Acknowledgement}

The authors wish to thank Stefanie Petermichl and Jill Pipher for suggesting to both of them this beautiful subject for their thesis and having made this collaboration possible. We would also like to thank the anonymous referee for suggestions that have greatly improved the exposition of the article and for raising a question about an earlier proof of Theorem \ref{main}.

\section{Preliminaries}\label{prep}

We give some essential background for the proof of the main theorem.

\subsection{Dyadic shifts and representation theorem}

Recall that while the standard dyadic grid is defined as 
\[
\mathscr{D}^0:=\{2^{-k}([0,1)^d+m):k\in\Z,m\in\Z^d\},
\]
for any parameter $\omega=(\omega_j)_{j\in\Z}\in(\{0,1\}^d)^{\Z}$, one can define an associated shifted dyadic grid as
\[
\mathscr{D}^{\omega}:=\{I\dot+\omega:I\in\mathscr{D}^0\}
\]
where
\[
I\dot+\omega:=I+\sum_{j:2^{-j}<\ell(I)}2^{-j}\omega_j.
\]
For a fixed shifted grid $\mathscr{D}^\omega$ and $i,j\in\Z_{+}$, a dyadic shift operator $S_\omega^{ij}$ is defined to be bounded on $L^2$ with operator norm less than 1. Specifically,
\[
S_\omega^{ij}f:=\sum_{K\in\mathscr{D}^\omega}\sum_{\substack{I\in\mathscr{D}^\omega, I\subset K\\\ell(I)=2^{-i}\ell(K)}}\sum_{\substack{J\in\mathscr{D}^\omega, J\subset K\\\ell(J)=2^{-j}\ell(K)}}a_{IJK}\pair{f}{h_I}h_J=:\sum_K\sum_{I,J\subset K}^{(i,j)}a_{IJK}\pair{f}{h_I}h_J,
\]
with $|a_{IJK}|\leq |I|^{1/2}|J|^{1/2}/|K|$. $S_\omega^{ij}$ is called cancellative if all the Haar functions in the definition are cancellative, otherwise, it is called noncancellative. 

Recall that in one dimension, any dyadic interval $I$ is associated with a cancellative Haar function $h_I^0=|I|^{-1/2}(\chi_{I_l}-\chi_{I_r})$ and a noncancellative one $h_I^1=|I|^{-1/2}\chi_I$. While in $d$ dimensions, each cube $I=I_1\times\cdots\times I_d$ is associated with $2^d$ Haar functions:
\[
h_I^\epsilon(x)=h_{I_1\times\cdots\times I_d}^{(\epsilon_1,\ldots,\epsilon_d)}(x_1,\ldots,x_d)=\prod_{i=1}^d h_{I_i}^{\epsilon_i}(x_i),\,\epsilon\in\{0,1\}^d,
\]
where $h_I^1$ is called noncancellative, while all the other $2^d-1$ Haar functions $h_I^\epsilon$ for $\epsilon\in\{0,1\}^d\setminus\{1\}$ are cancellative. Note that all the cancellative Haar functions for a fixed grid form an orthonormal basis of $L^2(\R^d)$. And in this paper, we usually suppress the parameter $\epsilon$ to abbreviate the notation.

We now introduce T. Hyt\"onen's representation theorem, a key tool in our proof. Interested readers can find its proof and a more detailed discussion in \cite{Hy} and \cite{Hy2}. The operator $T$ mentioned in the following will denote a Calder\'on-Zygmund operator associated with a $\delta$-standard kernel $K$. T. Hyt\"onen \cite{Hy} proved the following theorem:

\begin{thm}\label{repre}
Let $T$ be a Calder\'on-Zygmund operator, then it has an expansion, say for $f,g\in C^\infty_0(\R^d)$,
\[
\pair{g}{Tf}=c\cdot\|T\|_{CZ}\cdot\mathbb{E}_\omega\sum^\infty_{i,j=0}2^{-\max{(i,j)}\delta/2}\pair{g}{S^{ij}_\omega f},
\]
where $c$ is a dimensional constant and $S^{ij}_\omega$ is a dyadic shift of parameter $(i,j)$ on the dyadic grid $\mathcal{D}^\omega$; all of them except possibly $S^{00}_\omega$ are cancellative.
\end{thm}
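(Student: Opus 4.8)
The plan is to expand both functions in the Haar basis of a \emph{randomly chosen} dyadic grid and then reorganize the resulting bilinear form into dyadic shifts. Fix $\omega$ and write $f=\sum_{I\in\mathscr{D}^\omega}\pair{f}{h_I}h_I$, $g=\sum_{J\in\mathscr{D}^\omega}\pair{g}{h_J}h_J$, so that
\[
\pair{g}{Tf}=\sum_{I,J\in\mathscr{D}^\omega}\pair{f}{h_I}\pair{g}{h_J}\pair{h_J}{Th_I}.
\]
Everything reduces to decomposing the matrix entries $\pair{h_J}{Th_I}$, after averaging in $\omega$, into shift coefficients obeying $|a_{IJK}|\leq|I|^{1/2}|J|^{1/2}/|K|$ with the decay $2^{-\max(i,j)\delta/2}$ extracted; the implicit constants and the norms $\|T1\|_{BMO}$, $\|T^*1\|_{BMO}$ are all controlled by $\|T\|_{CZ}$.

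The first step is the good/bad surgery. Fix a small parameter $\gamma\sim\delta$ and an integer $r$, and call $I\in\mathscr{D}^\omega$ \emph{good} if $\dist(I,\partial J)\gtrsim\ell(I)^\gamma\ell(J)^{1-\gamma}$ for every $J\in\mathscr{D}^\omega$ with $\ell(J)\geq 2^r\ell(I)$. For a random grid the probability that a fixed cube is bad is a constant strictly less than $1$, and, crucially, the goodness of a small cube is independent of the location of the much larger cubes. A now-standard averaging argument then lets one discard all pairs $(I,J)$ whose smaller cube is bad, reducing $\mathbb{E}_\omega\pair{g}{Tf}$ to a constant multiple of the same sum restricted to pairs whose smaller cube is good.

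Next one splits the good sum by the relative sizes and positions of $I$ and $J$. By symmetry assume $\ell(I)\leq\ell(J)$; let $K$ be the smallest common dyadic ancestor and put $\ell(I)=2^{-i}\ell(K)$, $\ell(J)=2^{-j}\ell(K)$. In the \emph{separated} regime ($I\cap J=\emptyset$, or $I$ lying far from $\partial J$ inside the complement of $J$) the kernel size estimate together with the cancellation $\int h_I=0$ yields a favorable bound for $\pair{h_J}{Th_I}$; goodness is precisely what makes the organization by the ancestor $K$ work, and it is where one \emph{spends} part of the H\"older exponent, so that after summing over the auxiliary positions the coefficients satisfy $|a_{IJK}|\leq|I|^{1/2}|J|^{1/2}/|K|$ with $2^{-\max(i,j)\delta/2}$ pulled out front, and the pieces assemble into cancellative shifts $S^{ij}_\omega$. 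The complementary regime is when $I$ sits well inside $J$, so that $h_J$ is constant on a neighborhood of $I$: here one uses the H\"older smoothness of the kernel and $\int h_I=0$ to again get cancellative shifts, except for the single degenerate term in which both Haar functions are replaced by the $L^2$-normalized indicators of $I$ and $J$. That term is collected into $S^{00}_\omega$ and is controlled by the hypotheses $T1,T^*1\in BMO$: it is the sum of the two classical paraproducts with symbols $T1$ and $T^*1$, which are bounded.

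The main obstacle is the interplay of the good/bad reduction with keeping the normalization exact: one must verify that, after grouping terms by the ancestor $K$ and summing over all the intermediate cubes allowed by goodness, the coefficients still obey $|a_{IJK}|\leq|I|^{1/2}|J|^{1/2}/|K|$ and that the leftover decay is precisely $2^{-\max(i,j)\delta/2}$ rather than the naive $2^{-\max(i,j)\delta}$ --- this is where the size of $\gamma$ relative to $\delta$ is calibrated. Finally, taking $\mathbb{E}_\omega$ of the regrouped expression and collecting the terms with equal $(i,j)$ produces the asserted expansion, and a routine density argument passes from the formal identity on finite Haar sums to all $f,g\in C_0^\infty(\R^d)$.
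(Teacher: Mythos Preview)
The paper does not prove this theorem at all: it is quoted as Hyt\"onen's representation theorem with the explicit remark that ``Interested readers can find its proof and a more detailed discussion in \cite{Hy} and \cite{Hy2}.'' So there is no paper-internal proof to compare against.

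That said, your sketch is an accurate outline of the argument in those references: random dyadic grids, the good/bad reduction with goodness parameter $\gamma$ calibrated to the kernel exponent $\delta$, the decomposition of the good bilinear form by relative position (separated, nested, adjacent), the assembly of the separated and deeply-nested pieces into cancellative shifts $S^{ij}_\omega$ with the extracted decay $2^{-\max(i,j)\delta/2}$, and the identification of the residual term with the dyadic paraproducts built from $T1$ and $T^*1$, which furnishes the possibly noncancellative $S^{00}_\omega$. This is exactly the mechanism the paper relies on when it remarks, just after the statement, that ``only $S_\omega^{00}$ may be noncancellative, and if this is the case, only one of $\{h_I\},\{h_J\}$ in its definition is noncancellative, i.e.\ $S_\omega^{00}$ is a paraproduct with some BMO symbol $a$ satisfying $\|a\|_{BMO}\leq 1$.'' Your proposal is therefore consistent with the cited proof and with how the result is used downstream in the paper.
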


According to the proof of Theorem \ref{repre}, in the representation of any $T$, only $S_\omega^{00}$ may be noncancellative, and if this is the case, only one of $\{h_I\}, \{h_J\}$ in its definition is noncancellative, i.e. $S_\omega^{00}$ is a paraproduct with some BMO symbol $a$ satisfying $\|a\|_{BMO}\leq 1$ and $a_I=\pair{a}{h_I}|I|^{-1/2}$, $\forall I\in\mathcal{D}$.

\subsection{Multi-parameter paraproducts}

Recall that a multi-parameter paraproduct associated with function $b$ can be viewed as a bilinear operator which is defined as
\[
B_0(b,f)=\sum_{R\in\mathscr{D}_{\vec{d}}} \beta_R\pair{b}{h_R^{\epsilon_1}}\pair{f}{h_R^{\epsilon_2}}h_R^{\epsilon_3}|R|^{-1/2},
\]
where $\epsilon_j\in\{0,1\}^{\vec{d}}$, $\mathscr{D}_{\vec{d}}$ denotes the tensor product of dyadic grids, and $\{\beta_R\}_R$ is a sequence satisfying $|\beta_R|\leq1$. Note that $h_R^{\epsilon_j}$ is cancellative if and only if $\epsilon_j\neq\vec{1}$. According to Journ\'e \cite{Jo} and later on improved by C. Muscalu, J. Pipher, T. Tao and C. Thiele \cite{MPTT} \cite{MPTT2}, one has the following boundedness result.

\begin{thm}\label{para}
Let $\vec{d}=(d_1,\ldots,d_t)$ and $\epsilon_j=(\epsilon_{j,1},\ldots,\epsilon_{j,t})$. If $\epsilon_1\neq\vec{1}$ and $\forall 1\leq s\leq t$, there is at most one of $j=2,3$ such that $\epsilon_{j,s}=\vec{1}$, then the operator $B_0$ satisfies
\[
B_0:\,BMO_{\text{prod}}(\R^{\vec{d}})\times L^2(\R^{\vec{d}})\rightarrow L^2(\R^{\vec{d}}).
\]
\end{thm}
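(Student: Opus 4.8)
The plan is to follow the proofs of Journ\'e~\cite{Jo} and of Muscalu--Pipher--Tao--Thiele~\cite{MPTT,MPTT2}; since the statement is classical I will only recall the structure of the argument in the dyadic (Haar) model used in this paper. First I would pass to the dual formulation: it suffices to show
\[
\big|\pair{g}{B_0(b,f)}\big|\ \le\ C\,\|b\|_{BMO_{prod}}\,\|f\|_2\,\|g\|_2,\qquad f,g\in L^2(\R^{\vec d}),
\]
and, since $|\beta_R|\le1$, even the absolute sum
\[
\Sigma\ :=\ \sum_{R\in\mathscr{D}_{\vec d}}|R|^{-1/2}\,\big|\pair{b}{h_R^{\epsilon_1}}\big|\,\big|\pair{f}{h_R^{\epsilon_2}}\big|\,\big|\pair{g}{h_R^{\epsilon_3}}\big|
\]
should be controlled by the same quantity. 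The hypothesis $\epsilon_1\ne\vec1$ is exactly what makes $b$ enter only through genuine cancellative product Haar coefficients, and the sole property of $BMO_{prod}$ I would use is the Chang--Fefferman characterization: $\{\pair{b}{h_R^{\epsilon_1}}\}_R$ is a Carleson sequence, i.e.
\[
\sum_{R\subset\Omega}\big|\pair{b}{h_R^{\epsilon_1}}\big|^2\ \le\ \|b\|_{BMO_{prod}}^2\,|\Omega|
\]
for every open $\Omega\subset\R^{\vec d}$ of finite measure.

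Next I would separate the roles of $f$ and $g$ parameter by parameter. Put $A=\{s:\epsilon_{2,s}=\vec1\}$, $B=\{s:\epsilon_{3,s}=\vec1\}$ and let $C$ be the remaining parameters; the hypothesis that for each $s$ at most one of $j=2,3$ has $\epsilon_{j,s}=\vec1$ is precisely the disjointness $A\cap B=\varnothing$. In a direction $s\in A$ the factor $h_R^{\epsilon_{2,s}}$ is an $L^2$-normalized indicator, so once the matching $|R_s|^{-1/2}$ is absorbed the $f$-contribution in that direction is an average of (the remaining cancellative Haar data of) $f$; symmetrically for $g$ in the directions of $B$, while in the directions of $C$ both $f$ and $g$ supply honest Haar coefficients. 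The averaged directions are dominated by dyadic strong maximal functions, which are bounded on $L^2$ by the Fefferman--Stein inequality iterated over the directions of $A$ (resp.\ $B$), and the cancellative directions are kept for square-function/Bessel summability. In the pure case, where $\epsilon_2=\vec1$ in every parameter and $\epsilon_1,\epsilon_3$ are cancellative (so $B_0$ is the standard product paraproduct $\Pi_b$), this already finishes: Cauchy--Schwarz in $R$ gives $\Sigma\le(\sum_R|\pair{b}{h_R^{\epsilon_1}}|^2\ave{|f|}_R^2)^{1/2}(\sum_R|\pair{g}{h_R^{\epsilon_3}}|^2)^{1/2}$, the second factor is $\|g\|_2$ by orthogonality, and the first is $\lesssim\|b\|_{BMO_{prod}}\|f\|_2$ by the product Carleson embedding theorem, proved by a layer-cake argument over the level sets $\Omega_\lambda=\{M_{\mathrm{strong}}f>\lambda\}$ together with the Chang--Fefferman condition above.

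The genuinely multi-parameter --- and hardest --- case is that of the \emph{mixed} paraproducts, in which $\epsilon_2$ and/or $\epsilon_3$ are noncancellative in parameters different from where the other indices are: then the $f$- and $g$-coefficients become partially averaged, are no longer square-summable on their own, and a single Cauchy--Schwarz no longer leaves a fully orthogonal slot. Here I would proceed either (a) by induction on the number $t$ of parameters, freezing all but one parameter and invoking the one-parameter paraproduct estimate in a vector-valued form, or (b) following \cite{MPTT}, by combining vector-valued maximal and square-function inequalities with the product Carleson embedding. In both routes the essential multi-parameter input is that dyadic rectangles subordinate to an open set do not tile it, so one cannot replace $\sum_{R\subset\Omega}$ by a sum over a partition; controlling the enlarged shadow of a stopping family of rectangles is exactly what Journ\'e's lemma provides, and this is the step where the structural hypothesis on $\epsilon_2,\epsilon_3$ (at most one averaged slot per parameter, so that enough cancellation survives in each coordinate) is used. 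With the one-parameter square-function and maximal-function theory and Journ\'e's lemma in hand, the remaining bookkeeping --- iterating these estimates and summing the resulting geometric series --- is routine; it is the Journ\'e-type geometry of the rectangles that I expect to be the main obstacle.
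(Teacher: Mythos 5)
The paper does not prove Theorem~\ref{para}; it is quoted from Journ\'e~\cite{Jo} and Muscalu--Pipher--Tao--Thiele~\cite{MPTT,MPTT2}, so there is no in-paper argument to compare your sketch against. What you write is a faithful summary of the outline of those proofs: dualize, use the Chang--Fefferman rectangle Carleson condition as the only input on $b$, split the parameters according to where $f$ (resp.\ $g$) is averaged, run the pure product paraproduct via Cauchy--Schwarz plus the product Carleson embedding, and isolate the mixed paraproducts as the multi-parameter core. That division into the cases $A$, $B$, $C$ with $A\cap B=\varnothing$ is exactly the structural content of the hypothesis on $(\epsilon_2,\epsilon_3)$.

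Two comments. First, as presented this is a road map rather than a proof: the pure case is complete in your sketch, but the mixed cases --- which are precisely where the theorem is nontrivial in more than one parameter --- are only gestured at (``induction on $t$ in a vector-valued form,'' ``Journ\'e's lemma''). If you were to fill this in you would need the actual stopping-time construction of~\cite{MPTT} (which, notably, avoids Journ\'e's covering lemma and instead relies on the Fefferman--Stein vector-valued maximal inequality together with iterated square functions and a careful level-set decomposition), or an explicit Journ\'e-lemma argument \`a la~\cite{Jo}; stating that the geometry is ``the main obstacle'' is accurate but does not resolve it. Second, a small but real point of precision: the hypothesis should be read as $\epsilon_{1,s}\neq\vec1$ for \emph{every} $s$ (cancellativity of the $b$-slot in each parameter separately), not merely $\epsilon_1\neq\vec1$ as a single vector --- only the former makes $\{\pair{b}{h_R^{\epsilon_1}}\}_R$ a genuine product Carleson sequence and lets the Chang--Fefferman condition enter. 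You implicitly read it this way (``genuine cancellative product Haar coefficients''), which is the intended meaning; it is worth making this explicit, since the paper's own phrasing is ambiguous on this point.
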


\section{Proof of the one-parameter case}

In this section, we present a detailed proof of the main theorem in the one-parameter setting, which will later on be utilized to prove the multi-parameter result in the next section. As an essential part of the proof, delicate estimates of new paraproducts and a new operator $P$ will be introduced.
 
Given a BMO function $b$ and a Calder\'on-Zygmund operator $T$, one could represent the commutator $[b,T]$ as an average of $[b,S^{ij}_\omega]$ due to Theorem \ref{repre}. Then, in order to prove the upper bound inequality, it suffices to prove that for any $f\in C^\infty_0(\R^d)$,
\begin{equation}\label{bound}
\|\sum^\infty_{i,j=0}2^{-\max{(i,j)}\delta/2}[b,S^{ij}_\omega]f\|_{L^2}\lesssim \|b\|_{BMO}\|f\|_{L^2}
\end{equation}
uniformly in $\omega$. In the following we will write $S^{ij}$ for short as the argument doesn't depend on $\omega$ explicitly. 

As a crucial ingredient in our argument, two kinds of paraproduct-like operators need to be introduced.

The first one is the bilinear operator $B_k$ which could be viewed as a generalized dyadic paraproduct:
\[
B_k(b,f):=\sum_I \beta_I\pair{b}{h_{I^{(k)}}}\pair{f}{h_I}h_I|I^{(k)}|^{-1/2},
\]
where $\{\beta_I\}_I$ is a sequence satisfying $|\beta_I|\leq1$, $k\geq 0$ is an arbitrary integer, and $I^{(k)}$ denotes the $k$-th dyadic ancestor of $I$. Note that when $k=0$, this is exactly the classical paraproduct that we have introduced at the end of the previous section, whose boundedness is stated in Theorem \ref{para}. Lemma \ref{paraB1} below shows that such boundedness holds uniformly for any $B_k$. 

The second one is the trilinear operator $P$ defined as
\[
P(b,a,f):=\sum_I\pair{b}{h_I}\pair{f}{h_I}|I|^{-1}\sum_{J: J\subsetneq I}\pair{a}{h_J}h_J,
\]
which will be proved to be bounded on $BMO\times BMO\times L^2\rightarrow L^2$ in Lemma \ref{oneparaP}.

The main theorem we will prove in this section is the following:

\begin{thm}\label{decomp}
For cancellative dyadic shift $S^{ij}$, $[b,S^{ij}]f$ can be represented as a finite linear combination of the following terms:
\begin{equation}\label{term}
S^{ij}(B_k(b,f)),\quad B_k(b,S^{ij}f)
\end{equation}
where the integer $k$ is such that $0\leq k\leq\max(i,j)$ and the total number of terms is bounded by $C(1+\max(i,j))$ for some universal dimensional constant $C$. 

For noncancellative dyadic shift $S^{00}$ (dyadic paraproduct) with symbol $a$, $[b,S^{00}]f$ can be represented as a finite linear combination of the following terms:
\begin{equation}\label{term1}
S^{00}(B_0(b,f)),\quad B_0(b,S^{00}f),\quad P(b,a,f),\quad P^*(b,a,f),
\end{equation}
where $P^*$ is understood as the adjoint of $P$ with $b$ and $a$ fixed, and the total number of terms is bounded by a universal dimensional constant.
\end{thm}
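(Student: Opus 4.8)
The plan is to expand $[b,S^{ij}]f = b\,S^{ij}f - S^{ij}(bf)$ by inserting the Haar expansion and reorganizing the double product $b\cdot(\text{Haar coefficient})$ according to the relative position of the cube carrying $b$ and the cube $K$ (or $I$, $J$) appearing in the shift. Concretely, write $b = \sum_L \pair{b}{h_L} h_L$ and expand the product $bf$ and $b\cdot S^{ij}f$ using the pointwise three-term product formula for Haar functions on a single grid: for cubes $L$ and $I$ one has $h_L h_I$ is again (a multiple of) a single Haar function or an indicator, with the outcome governed by whether $L \subsetneq I$, $L \supsetneq I$, $L = I$, or $L \cap I = \emptyset$. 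Applying this in both $b\,S^{ij}f$ and $S^{ij}(bf)$ produces a large but finite list of sums; the terms where $b$ lives on a cube disjoint from the relevant $K$ cancel between the two halves of the commutator, and we are left to classify the surviving terms.

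For the cancellative case, the key observation is that when we expand $b\,(S^{ij}f)$ the Haar function $h_L$ of $b$ either sits strictly above $J$ (inside $K$, so within $\max(i,j)$ generations of $J$), equals $J$, or sits strictly below $J$; symmetrically for $S^{ij}(bf)$ with $h_L$ relative to $I$. The terms where $h_L$ is strictly above $J$ at a fixed generation $k$, $0\le k\le j\le\max(i,j)$, reorganize into $S^{ij}(B_k(b,f))$ after commuting the sum over $L = J^{(k)}$ past the shift — this is exactly the structure of the generalized paraproduct $B_k$ because the coefficient $\pair{b}{h_{J^{(k)}}}|J^{(k)}|^{-1/2}$ is constant on $J$. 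The terms where $h_L = J$ or $h_L \subsetneq J$ must be shown, after pairing with the analogous pieces from the other half of the commutator and using $\sum_{L\subseteq J}\pair{b}{h_L}h_L = (E_J^{i+\text{something}} - E_J)b$ telescoping, to collapse again into finitely many $B_k$-type terms (or to cancel outright); the generation count $k\le\max(i,j)$ comes from the fact that $I$ and $J$ both lie in $K$ at depth at most $\max(i,j)$. Carefully bookkeeping which rearrangements land as $S^{ij}(B_k(b,f))$ versus $B_k(b,S^{ij}f)$, and checking that the coefficient bound $|\beta_I|\le 1$ survives (it does, since $|a_{IJK}|\le|I|^{1/2}|J|^{1/2}/|K|$ and the extra $|I^{(k)}|^{-1/2}$ factors telescope against volume ratios), gives the $C(1+\max(i,j))$ count.

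For the noncancellative paraproduct $S^{00}$ with symbol $a$, $\|a\|_{BMO}\le1$, the same expansion applies but now $S^{00}f = \sum_I \pair{a}{h_I}\pair{f}{h_I}h_I|I|^{-1/2}$ has the special feature that both the "input" cube and "output" cube coincide, so when we multiply by $b=\sum_L\pair{b}{h_L}h_L$ the new phenomenon is the diagonal interaction where $h_L$ ranges over cubes strictly inside $I$: this produces exactly $\sum_I\pair{b}{h_I}\pair{f}{h_I}|I|^{-1}\sum_{J\subsetneq I}\pair{a}{h_J}h_J$, i.e. $P(b,a,f)$, together with its adjoint $P^*(b,a,f)$ from the dual pairing when $h_L$ ranges over cubes strictly containing $I$ on the $a$-side. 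The terms where $h_L \supseteq I$ on the $f$-side, or $h_L = I$, collapse as before into $S^{00}(B_0(b,f))$ and $B_0(b,S^{00}f)$ (the $k=0$ case, since all cubes are at the same scale in a paraproduct). The main obstacle will be the combinatorial bookkeeping: making sure every one of the roughly $3\times 3$ cases from the double Haar product in each half of the commutator is correctly identified, that the disjoint cases cancel, and that the "$h_L$ above" cases at each generation $k$ genuinely reassemble into a single $B_k$ with an admissible coefficient sequence rather than leaving a residual error — this requires being precise about the Haar product identities and about how $E_J$ (conditional expectation / averaging) interacts with the telescoping. Once the algebraic identity is nailed down, the boundedness of each resulting term is immediate from Lemma~\ref{paraB1} and Lemma~\ref{oneparaP} together with $\|S^{ij}\|_{L^2\to L^2}\le1$, and summing the geometric series $\sum 2^{-\max(i,j)\delta/2}\cdot C(1+\max(i,j))$ in \eqref{bound} closes the argument.
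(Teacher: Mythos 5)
Your overall strategy---Haar-expand $b$ (and $f$), sort terms by the relative position of the $b$-cube to the cubes of the shift, and reassemble what survives into $B_k$'s and $P$---is exactly the strategy of the paper, which writes $[b,S^{ij}]f=\sum_{I,J}\pair{b}{h_I}\pair{f}{h_J}[h_I,S^{ij}]h_J$ and then treats the two pieces $h_I S^{ij}h_J$ and $S^{ij}(h_Ih_J)$ separately. However, several of the details you describe are either wrong or conflated, and they are not minor.

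First, you claim "the terms where $b$ lives on a cube disjoint from the relevant $K$ cancel between the two halves of the commutator." Disjointness makes each half vanish \emph{individually}; no cancellation is needed there. The place where the commutator structure is genuinely used is the case $L\supsetneq J^{(i)}$ (the $b$-cube strictly contains the entire scope $K=J^{(i)}$ of the shift): there $h_L$ is constant on $J^{(i)}$, so $h_L S^{ij}h_J$ and $S^{ij}(h_Lh_J)$ are both equal to $h_L(x_0)S^{ij}h_J$ and cancel. You never mention this case, and without it you have no reason to restrict $L$ to within $\max(i,j)$ generations of the $f$-cube---which is precisely what makes the list of $B_k$'s finite. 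This is the most important single observation in the proof and it is missing.

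Second, your attribution of which pieces become $S^{ij}(B_k(b,f))$ and which become $B_k(b,S^{ij}f)$ is scrambled. In the piece $bS^{ij}f$ you reorganize over the \emph{output} cubes $J'$ of the shift; splitting into $J'\supsetneq I$ and $J'\subset I\subset J'^{(j)}$ gives $B_0(b,S^{ij}f)$ and $B_k(b,S^{ij}f)$ for $0\le k\le j$ (not $S^{ij}(B_k)$, and the $j$ in your $k\le j$ belongs to this half). In the piece $S^{ij}(bf)$ you reorganize over the \emph{input} cubes $J$; splitting into $I\subsetneq J$, $I=J$, $J\subsetneq I\subset J^{(i)}$ gives $S^{ij}(B_0(b,f))$ and $S^{ij}(B_k(b,f))$ for $1\le k\le i$. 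You have $S^{ij}(B_k)$ paired with the wrong index range and with the wrong half of the commutator. Relatedly, the telescoping $\sum_{L\subseteq J}\pair{b}{h_L}h_L=(E_J^{\cdots}-E_J)b$ is not what happens: the sub-$J$ sum is the infinite tail that becomes $\pair{b}{h_J^1}h_J^1$, i.e.\ the $B_0$ piece, with no telescoping needed. Finally, $P$ and $P^*$ do not arise from $a$-cubes strictly inside versus strictly containing $I$; $P$ is exactly the diagonal $I=J$ contribution of $h_IS^{00}h_I$, and $P^*$ appears because the noncancellative Haar in $S^{00}$ can sit on either the input or the output side (the two possibilities are adjoints of one another). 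These are fixable by carrying out the algebra rather than sketching it, but as written the proposal would not produce the claimed identity.
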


\begin{rem}
The representation claimed in Theorem \ref{decomp} is far from unique. In fact, suggested by its proof, the readers can easily come up with representations of $[b,S^{ij}]f$ using other types of paraproducts, by decomposing the Haar sums differently. Moreover, as shown in the proof, the representation can be made such that except when $k=0$, all the Haar functions appearing in $B_k(b,f)$ are cancellative. 
\end{rem}

It is easy to see that Theorem \ref{main} is implied by Theorem \ref{decomp}. Indeed, given the boundedness of $S^{ij}$, Lemma \ref{paraB1}, Lemma \ref{oneparaP} together with Theorem \ref{para} guarantee the uniform boundedness of each of  the terms in (\ref{term}) and (\ref{term1}). Hence, 
\[
\|[b,S^{ij}]f\|_{L^2}\lesssim (1+\max(i,j))\|b\|_{BMO}\|f\|_{L^2}.
\]
Note that the uniform boundedness of $B_k$ with respect to $k$ is key in the above argument, which is also the main difficulty of the proof of Lemma \ref{paraB1}. Then, with the decaying factor $2^{-\max(i,j)\delta/2}$ in front, (\ref{bound}) follows from a simple geometric series argument.

\begin{lem}\label{paraB1}
Given $b\in BMO(\R^d)$ and $k\geq 0$, let
\[
B_k(b,f)=\sum_I \beta_I\pair{b}{h_{I^{(k)}}}\pair{f}{h_I}h_I|I^{(k)}|^{-1/2},
\]
where all the Haar functions are cancellative. Then $\|B_k(b,f)\|_{L^2}\lesssim \|b\|_{BMO}\|f\|_{L^2}$ with a constant independent of $k$.
\end{lem}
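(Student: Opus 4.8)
The plan is to reduce the estimate for $B_k$ to a square-function bound that does not see the parameter $k$, by exploiting the orthogonality of the output Haar functions $h_I$ and controlling the coefficients $\pair{b}{h_{I^{(k)}}}|I^{(k)}|^{-1/2}$ by a Carleson-type quantity. Since $\|B_k(b,f)\|_{L^2}^2 = \sum_I |\beta_I|^2 |\pair{b}{h_{I^{(k)}}}|^2 |I^{(k)}|^{-1} |\pair{f}{h_I}|^2$ and $|\beta_I|\le 1$, it suffices to show
\[
\sum_I \frac{|\pair{b}{h_{I^{(k)}}}|^2}{|I^{(k)}|}\,|\pair{f}{h_I}|^2 \lesssim \|b\|_{BMO}^2 \|f\|_{L^2}^2
\]
uniformly in $k$. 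First I would regroup the sum according to the common ancestor: write $Q=I^{(k)}$ and sum over all $I\in\mathscr{D}$ with $I^{(k)}=Q$, so the left side becomes $\sum_Q \frac{|\pair{b}{h_Q}|^2}{|Q|}\Big(\sum_{I:\,I^{(k)}=Q}|\pair{f}{h_I}|^2\Big)$.

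The key observation is that for fixed $k$ the inner sum $\sum_{I:\,I^{(k)}=Q}|\pair{f}{h_I}|^2$ involves only the $I$ at one fixed generation below $Q$, namely $\ell(I)=2^{-k}\ell(Q)$; in particular it is at most $\sum_{I\subset Q}|\pair{f}{h_I}|^2$, which is the standard quantity appearing in the paraproduct/Carleson argument. Thus I would invoke the Carleson embedding theorem: since $b\in BMO$, the coefficients $\lambda_Q := |\pair{b}{h_Q}|^2$ satisfy the Carleson condition $\sum_{Q\subset R}\lambda_Q \le \|b\|_{BMO}^2 |R|$ for every dyadic $R$, and therefore $\sum_Q \frac{\lambda_Q}{|Q|}\, c_Q \lesssim \|b\|_{BMO}^2\sum_I |\pair{f}{h_I}|^2 = \|b\|_{BMO}^2\|f\|_{L^2}^2$ whenever $c_Q \le \sum_{I\subset Q}|\pair{f}{h_I}|^2$; the Carleson constant here is independent of $k$. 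Combining this with the two displays above finishes the proof. One should also check the easy case where a Haar function may fail to be cancellative is excluded by hypothesis, so no noncancellative $h_I^{\vec 1}$ terms arise and the Bessel/orthogonality identity for the output is exact.

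The main obstacle — and the reason the lemma is stated separately — is precisely guaranteeing that the constant does not blow up with $k$. The naive bound $|\pair{b}{h_{I^{(k)}}}|\,|I^{(k)}|^{-1/2}\lesssim \|b\|_{BMO}$ applied term by term would, after summing in $I$, produce a factor proportional to the number of descendants, i.e. $2^{dk}$, which is useless. The regrouping by common ancestor above is exactly what avoids this: it trades the $2^{dk}$ multiplicity for a single application of Carleson embedding with the $b$-coefficients placed on the ancestor cubes $Q$, whose Carleson constant is the $k$-independent quantity $\|b\|_{BMO}^2$. I would present the argument in this order — square-function reduction, regrouping by $I^{(k)}=Q$, Carleson embedding — and emphasize in the write-up that the only place the structure of $B_k$ enters is through the inequality $\sum_{I:\,I^{(k)}=Q}|\pair{f}{h_I}|^2\le \sum_{I\subset Q}|\pair{f}{h_I}|^2$, which holds verbatim for every $k\ge 0$.
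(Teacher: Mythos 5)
Your proposal contains two issues, one a misconception and one a genuine gap. The misconception: you say that the pointwise bound $|\pair{b}{h_{I^{(k)}}}|\,|I^{(k)}|^{-1/2}\le\|b\|_{BMO}$ applied term by term would incur a factor $2^{dk}$ from counting descendants. It does not. After the orthogonality identity $\|B_k(b,f)\|_{L^2}^2=\sum_I|\beta_I|^2\,|\pair{b}{h_{I^{(k)}}}|^2\,|I^{(k)}|^{-1}\,|\pair{f}{h_I}|^2$, the sum runs exactly once over each dyadic cube $I$, and each $I$ has a unique $k$-th ancestor; pulling out $\sup_I|\pair{b}{h_{I^{(k)}}}|^2/|I^{(k)}|\le\|b\|_{BMO}^2$ gives $\|B_k(b,f)\|_{L^2}^2\le\|b\|_{BMO}^2\sum_I|\pair{f}{h_I}|^2=\|b\|_{BMO}^2\|f\|_{L^2}^2$ with no multiplicity whatsoever. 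This is exactly the ``martingale transform observation'' the paper records right before giving its own (square-function) proof.

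The genuine gap is the Carleson-embedding step. You assert: if $\{\lambda_Q\}$ satisfies $\sum_{Q\subset R}\lambda_Q\le A|R|$, then $\sum_Q\lambda_Q|Q|^{-1}c_Q\lesssim A\|f\|_{L^2}^2$ whenever $c_Q\le\sum_{I\subset Q}|\pair{f}{h_I}|^2$. This is false as stated. Take $c_Q=\sum_{I\subset Q}|\pair{f}{h_I}|^2$ itself and interchange sums: you get $\sum_I|\pair{f}{h_I}|^2\sum_{Q\supset I}\lambda_Q|Q|^{-1}$, and the inner sum over ancestors is not controlled by the Carleson condition --- it can be infinite (for instance $f=h_{I_0}$ and $\lambda_{Q}=|Q|$ for $Q$ in a chain $I_0\subsetneq Q_1\subsetneq Q_2\subsetneq\cdots$, zero otherwise, is a Carleson sequence, even one of the form $|\pair{b}{h_Q}|^2$ for some $b\in BMO$, yet the double sum diverges). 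Carleson embedding controls $\sum_Q\lambda_Q\langle|f|\rangle_Q^2$, which is a different object. The bound you want does hold for your specific $c_Q=\sum_{I:I^{(k)}=Q}|\pair{f}{h_I}|^2$, but only because the pointwise estimate $\lambda_Q/|Q|\le\|b\|_{BMO}^2$ (which is the $R=Q$ case of your Carleson condition) together with the disjointness of the fibers of $I\mapsto I^{(k)}$ already closes it --- i.e.\ your regrouping collapses back to the trivial argument of the previous paragraph, and ``Carleson embedding'' is not doing any work. For the record, the paper deliberately avoids the trivial argument and proves the lemma via $H^1$--BMO duality and a generation-$k$ square function $S^{(k)}f=(\sum_J\sum_{I:I^{(k)}=J}|\pair{f}{h_I}|^2|J|^{-1}\chi_J)^{1/2}$ with $k$-uniform $L^2$ bound, precisely because that scheme survives the passage to the multi-parameter operators $B_{k,l}$, $BP_k$, $PB_l$, where noncancellative Haar functions appear and the orthogonality/martingale-transform shortcut is unavailable.
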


Before we proceed to its proof, note that for the application to our problem, there is no need to include cases when some of the Haar functions in $B_k$ are noncancellative according to the remark above. Hence, $B_k(b,f)$ is in fact a martingale transform whose uniform boundedness follows directly from the observation $|\pair{b}{h_{I^{(k)}}}|/|I^{(k)}|^{1/2}\leq\|b\|_{BMO}$. However, we will present a different proof via square function in the following, which will provide some insight into the estimates of some other operators and the multi-parameter analogs of the result, where noncancellative Haar functions have to be taken into account.
\begin{proof}
For any $g\in L^2(\R^d)$, 
\[
\pair{B_k(b,f)}{g}=\pair{b}{\sum_I\beta_I\pair{f}{h_I}\pair{g}{h_I}h_{I^{(k)}}|I^{(k)}|^{-1/2}}.
\]

It thus suffices to show that
\[
\|\sum_I\beta_I\pair{f}{h_I}\pair{g}{h_I}h_{I^{(k)}}|I^{(k)}|^{-1/2}\|_{H^1}\lesssim\|f\|_{L^2}\|g\|_{L^2},
\]
which is equivalent to
\[
\|S\big(\sum_I \beta_I\pair{f}{h_I}\pair{g}{h_I}h_{I^{(k)}}|I^{(k)}|^{-1/2}\big)\|_{L^1}\lesssim \|f\|_{L^2}\|g\|_{L^2},
\]
where in the above $S$ denotes the dyadic square function.

To see this, write
\[
S\big(\sum_I \beta_I\pair{f}{h_I}\pair{g}{h_I}h_{I^{(k)}}|I^{(k)}|^{-1/2}\big)^2=\sum_J\left(\sum_{I: I^{(k)}=J}\beta_I\pair{f}{h_I}\pair{g}{h_I}|J|^{-1/2}\right)^2\frac{\chi_J}{|J|}
\]
which together with $\|\cdot\|_{\ell^2}\leq \|\cdot\|_{\ell^1}$ and Cauchy-Schwarz inequality implies
\[
\begin{split}
&S\big(\sum_I \beta_I\pair{f}{h_I}\pair{g}{h_I}h_{I^{(k)}}|I^{(k)}|^{-1/2}\big)\\
&\leq \sum_J\left(\sum_{I: I^{(k)}=J}|\pair{f}{h_I}||\pair{g}{h_I}|\frac{\chi_J}{|J|}\right)\\
&\leq\sum_J\left(\sum_{I: I^{(k)}=J}|\pair{f}{h_I}|^2\right)^{1/2}\left(\sum_{I: I^{(k)}=J}|\pair{g}{h_I}|^2\right)^{1/2}\frac{\chi_J}{|J|}\\
&\leq\left(\sum_J\sum_{I: I^{(k)}=J}|\pair{f}{h_I}|^2\frac{\chi_J}{|J|}\right)^{1/2}\left(\sum_J\sum_{I: I^{(k)}=J}|\pair{g}{h_I}|^2\frac{\chi_J}{|J|}\right)^{1/2}\\
&=:(S^{(k)}f)(S^{(k)}g).
\end{split}
\]
where the operator $S^{(k)}f:=(\sum_J\sum_{I: I^{(k)}=J}|\pair{f}{h_I}|^2|J|^{-1}\chi_{J})^{1/2}$. We claim that $S^{(k)}: L^2\rightarrow L^2$ with norm bounded by a dimensional constant, which does not depend on $k$. This guarantees that our estimate of $B_k$ becomes independent of $k$. Combining this with another use of Cauchy-Schwarz  will complete the proof.

To show the claim, denote $\alpha_J=(\sum_{I: I^{(k)}=J}|\pair{f}{h_I}|^2)^{1/2}$ for any $J$ and define $F(x)=\sum_J\alpha_Jh_J(x)$. Then
\[
\begin{split}
\|S^{(k)}f\|_{L^2}^2&=\|(\sum_J\alpha_J^2\frac{\chi_J}{|J|})^{1/2}\|_{L^2}^2=\|SF\|_{L^2}^2\\
&\lesssim\|F\|_{L^2}^2=\sum_J\alpha_J^2=\sum_J\sum_{I: I^{(k)}=J}|\pair{f}{h_I}|^2=\sum_I|\pair{f}{h_I}|^2=\|f\|_{L^2}^2,
\end{split}
\]
where the second to last equality holds because that cube $I$ in the previous summation ranges over all the dyadic cubes exactly once. 
\end{proof}

\begin{lem}\label{oneparaP}
For tri-linear operator 
\[
P(b,a,f):=\sum_I\pair{b}{h_I}\pair{f}{h_I}|I|^{-1}\sum_{J: J\subsetneq I}\pair{a}{h_J}h_J,
\]
there holds
\[
\|P(b,a,f)\|_{L^2}\lesssim \|b\|_{BMO}\|a\|_{BMO}\|f\|_{L^2}.
\]
\end{lem}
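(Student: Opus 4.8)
The plan is to dualize and reduce the claim to an $H^1$--$\mathrm{BMO}$ pairing, exactly as in the proof of Lemma \ref{paraB1}. For $g\in L^2(\R^d)$ we compute
\[
\pair{P(b,a,f)}{g}=\sum_I\pair{b}{h_I}\pair{f}{h_I}|I|^{-1}\sum_{J:J\subsetneq I}\pair{a}{h_J}\pair{g}{h_J},
\]
and the goal becomes to bound this by $\|b\|_{BMO}\|a\|_{BMO}\|f\|_{L^2}\|g\|_{L^2}$. Pulling out $a$, it suffices to show that the function
\[
\Phi:=\sum_J\Big(\sum_{I:I\supsetneq J}\pair{b}{h_I}\pair{f}{h_I}|I|^{-1}\pair{g}{h_J}\Big)h_J
\]
has $\|\Phi\|_{H^1}\lesssim\|b\|_{BMO}\|f\|_{L^2}\|g\|_{L^2}$, since then $\pair{P(b,a,f)}{g}=\pair{a}{\Phi}\le\|a\|_{BMO}\|\Phi\|_{H^1}$. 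So the whole problem is an $H^1$ (equivalently, dyadic square function) estimate for $\Phi$.

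Next I would set up the square function. Writing $c_I:=\pair{b}{h_I}|I|^{-1/2}$ (so $|c_I|\le\|b\|_{BMO}$ by the standard Haar-coefficient bound), and $E_J b:=\sum_{I\supseteq J}|I|^{-1/2}\pair{b}{h_I}h_I$ for the conditional expectation of $b$ onto the scale of $J$, one recognizes $\sum_{I\supsetneq J}\pair{b}{h_I}\pair{f}{h_I}|I|^{-1}$ as a sum that can be reorganized. The cleanest route: note that the inner coefficient, call it $\lambda_J:=\sum_{I\supsetneq J}\pair{b}{h_I}\pair{f}{h_I}|I|^{-1}$, satisfies $|\lambda_J|\le\|b\|_{BMO}\,\big(M_{\mathrm{dy}}(\text{something involving }f)\big)(x)$ for $x\in J$; more precisely, by Cauchy--Schwarz on the truncated sum, for any $x\in J$,
\[
|\lambda_J|\le\Big(\sum_{I\supsetneq J}|c_I|^2\Big)^{1/2}\Big(\sum_{I\supsetneq J}|\pair{f}{h_I}|^2|I|^{-1}\Big)^{1/2},
\]
but the first factor is not bounded (it grows with depth), so a naive Cauchy--Schwarz is lossy. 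Instead I would use the BMO/Carleson structure of $b$: by the dyadic John--Nirenberg / Carleson embedding theorem, $\sum_{I\subseteq I_0}|c_I|^2|I| \lesssim \|b\|_{BMO}^2|I_0|$, i.e. $\{|c_I|^2|I|\}$ is a Carleson sequence. Then
\[
S\Phi(x)^2=\sum_{J\ni x}|\lambda_J|^2|\pair{g}{h_J}|^2|J|^{-1},
\]
and I would bound $\|S\Phi\|_{L^1}$ by splitting $\|S\Phi\|_{L^1}\le \||\lambda|\cdot S_2\|_{L^1}$-type estimates; more effectively, integrate $\|S\Phi\|_{L^1}=\int(\sum_J|\lambda_J|^2|\pair{g}{h_J}|^2|J|^{-1}\chi_J)^{1/2}$ and compare $|\lambda_J|$ pointwise on $J$ with $\sup_{J'\supseteq J}|\ave{f}{J'}-\ave{f}{J''}|$-style quantities — i.e. recognize that $\lambda_J$ is essentially $\pair{b}{\text{(martingale difference truncation)}}$ paired against $f$, which is controlled by $\|b\|_{BMO}$ times the dyadic maximal function of an $f$-martingale. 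Concretely, the standard estimate is $|\lambda_J|\lesssim \|b\|_{BMO}\, (S f)(x)$ for $x\in J$ is false in general, so the right tool is: view $\sum_{I\supsetneq J}c_I|I|^{1/2}\pair{f}{h_I}|I|^{-1}$ and apply the Carleson embedding theorem for the sequence $\{|c_I|^2|I|\}$ to the function $\sum_I|I|^{1/2}\pair{f}{h_I}|I|^{-1}\chi_I$, yielding $\sum_J|\lambda_J|^2|J| \cdot(\text{stuff})\lesssim\|b\|^2_{BMO}\|f\|^2_{L^2}$ after a further Cauchy--Schwarz against $g$.

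The cleanest self-contained argument, which I would actually write out, is this: by Cauchy--Schwarz in $J$ after inserting $g$,
\[
|\pair{a}{\Phi}|\le\|a\|_{BMO}\Big\|\Big(\sum_J|\lambda_J|^2|\pair{g}{h_J}|^2|J|^{-1}\chi_J\Big)^{1/2}\Big\|_{L^1}\le\|a\|_{BMO}\Big\|\Big(\sum_J|\lambda_J|^2|\pair{g}{h_J}|^2|J|^{-1}\chi_J\Big)^{1/2}\Big\|_{L^1},
\]
then bound the $L^1$ norm by $\|(\sup_J|\lambda_J|\chi_J)\|_{L^2}\,\|Sg\|_{L^2}$ via Cauchy--Schwarz, where $\sup_J|\lambda_J|\chi_J$ is dominated by $\|b\|_{BMO}\, M_{\mathrm{dy}}(\mathcal{S}f)$ or handled directly by the Carleson embedding theorem applied to $\{|c_I|^2|I|\}_I$; indeed $\lambda_J=\sum_{I\supsetneq J}c_I\,|I|^{-1/2}\pair{f}{h_I}$ and Carleson embedding gives $\int(\sup_J|\lambda_J|\chi_J)^2\lesssim\|b\|_{BMO}^2\|f\|_{L^2}^2$. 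Combining, $|\pair{P(b,a,f)}{g}|\lesssim\|a\|_{BMO}\|b\|_{BMO}\|f\|_{L^2}\|g\|_{L^2}$, and taking the supremum over $g$ with $\|g\|_{L^2}\le1$ finishes the proof.

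The main obstacle, and the step I expect to require the most care, is the uniform control of the coefficient sequence $\{\lambda_J\}$: one cannot afford to lose the logarithmic (in depth) factor that a crude Cauchy--Schwarz over $\{I\supsetneq J\}$ would produce, so the Carleson/John--Nirenberg structure of the Haar coefficients of $b$ must be used in an essential way — this is precisely the point where $\|b\|_{BMO}$ (rather than merely $\|b\|_{L^\infty}$ or an $\ell^2$ bound on a few coefficients) enters. Once the Carleson embedding theorem is invoked correctly, the remainder is two applications of Cauchy--Schwarz and the $L^2$-boundedness of the dyadic square function, all of which are routine.
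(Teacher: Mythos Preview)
Your strategy is to pair with $g$, pull out $a$ via $H^1$--BMO duality, and then control the resulting Haar coefficients $\lambda_J=\sum_{I\supsetneq J}\pair{b}{h_I}\pair{f}{h_I}|I|^{-1}$. The paper does the opposite: it pulls out $b$ first, leaving the \emph{localized} sum $\sum_{J\subsetneq I}\pair{a}{h_J}\pair{g}{h_J}$, which John--Nirenberg bounds by $\|a\|_{BMO}|I|(\ave{|g|^p}_I)^{1/p}$ for $1<p<2$; the square function then factors pointwise as $\|a\|_{BMO}\,M(|g|^p)^{1/p}\,Sf$ and one is done. Your reversal trades a descendant sum (tailor-made for BMO) for an ancestor sum, and that is where the real work hides.

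The gap is the step $\|\sup_{J\ni x}|\lambda_J|\|_{L^2}\lesssim\|b\|_{BMO}\|f\|_{L^2}$. Your first suggested pointwise bound, $\sup_{J\ni x}|\lambda_J|\lesssim\|b\|_{BMO}M_{\mathrm{dy}}(Sf)(x)$, is false: take $b=f$ supported on a single dyadic chain $I_0\supsetneq I_1\supsetneq\cdots$ with $\pair{b}{h_{I_k}}=|I_k|^{1/2}$; then $\|b\|_{BMO}\sim 1$, yet for $x\in I_m\setminus I_{m+1}$ one has $\sup_{J\ni x}|\lambda_J|=m+1$ while $M_{\mathrm{dy}}(Sb)(x)\sim\sqrt{m+1}$. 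Your second suggestion, ``Carleson embedding applied to $\{|c_I|^2|I|\}$'', does not match the shape of the quantity: Carleson embedding controls $\sum_I\alpha_I\ave{\phi}_I^2$, not a supremum of partial sums over ancestors. The estimate you need \emph{is} true, but proving it amounts to writing $\lambda_J=\ave{\Pi_b^*f}_J-|J|^{-1}\sum_{I\subseteq J}\pair{b}{h_I}\pair{f}{h_I}$ (up to a single diagonal term), bounding the first piece by $M_d(\Pi_b^*f)$, and bounding the second by $\|b\|_{BMO}(\ave{|f|^p}_J)^{1/p}$ via John--Nirenberg on $b$. In other words, a correct completion of your route uses exactly the tool the paper uses --- just applied to $b$ rather than $a$ --- plus the $L^2$ boundedness of the paraproduct $\Pi_b^*$ and the maximal function. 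None of this is in your write-up; ``Carleson embedding'' alone does not deliver it.

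So: the approach is salvageable and genuinely different in orientation, but as written the crucial inequality is asserted, not proved, and one of the two offered justifications is wrong. The paper's choice to dualize against $b$ rather than $a$ is not accidental --- it keeps the inner BMO sum local and avoids the maximal-truncation issue entirely.
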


\begin{proof}
The idea of the proof is to employ the $H^1$-BMO duality and the square function characterization of $H^1$. For any normalized test function $g\in L^2$,
\[
\pair{P(b,a,f)}{g}=\pair{b}{\sum_{I}\pair{f}{h_I}|I|^{-1}h_I\sum_{J: J\subsetneq I}\pair{a}{h_J}\pair{g}{h_J}}.
\]

To see where the BMO norm of $a$ comes into play, observe that for any fixed $I$ and some $1<p<2$,
\[
\begin{split}
&|\sum_{J: J\subsetneq I}\pair{a}{h_J}\pair{g}{h_J}|=|\pair{\sum_{J: J\subsetneq I}\pair{a}{h_J}h_J}{g\chi_I}|\\
\leq&\|\sum_{J: J\subsetneq I}\pair{a}{h_J}h_J\|_{L^{p'}}\|g\chi_I\|_{L^p}\lesssim\|(\sum_{J: J\subsetneq I}|\pair{a}{h_J}|^2\frac{\chi_J}{|J|})^{1/2}\|_{L^{p'}}\|g\chi_I\|_{L^p}\\
\lesssim&\|a\|_{BMO}|I|^{1/p'}\|g\chi_I\|_{L^p}=\|a\|_{BMO}|I|(\ave{|g|^p}_I)^{1/p},
\end{split}
\]
where the last inequality follows from John-Nirenberg inequality.

Therefore,
\[
\begin{split}
&S(\sum_{I}\pair{f}{h_I}|I|^{-1}h_I\sum_{J: J\subsetneq I}\pair{a}{h_J}\pair{g}{h_J})=\big(\sum_{I}|\pair{f}{h_I}|^2|I|^{-2}(\sum_{J: J\subsetneq I}\pair{a}{h_J}\pair{g}{h_J})^2\frac{\chi_I}{|I|}\big)^{1/2}\\
\leq&\|a\|_{BMO}\big(\sum_{I}|\pair{f}{h_I}|^2(\ave{|g|^p}_I)^{2/p}\frac{\chi_I}{|I|}\big)^{1/2}\leq\|a\|_{BMO}\big(\sum_I|\pair{f}{h_I}|^2\sup_{I: x\in I}(\ave{|g|^p}_I)^{2/p}\frac{\chi_I}{|I|}\big)^{1/2}\\
\leq&\|a\|_{BMO}M(|g|^p)^{1/p}S(f),
\end{split}
\]
where $M$ is the Hardy-Littlewood maximal function which is bounded on $L^p$, $1<p<\infty$. Hence,
\[
\|P(b,a,f)\|_{L^2}\lesssim\|b\|_{BMO}\|a\|_{BMO}\|M(|g|^p)^{1/p}\|_{L^2}\|S(f)\|_{L^2}\lesssim\|b\|_{BMO}\|a\|_{BMO}\|f\|_{L^2}.
\]
\end{proof}

Now we turn to the proof of Theorem \ref{decomp} and the strategy is the following. First, we decompose $b$ and $f$ using Haar bases. Second, we split the sum into several parts and represent each of them as a linear combination of terms in Theorem \ref{decomp}. 

To start with, one decomposes $[b,S^{ij}]f$ as
\[
\begin{split}
[b,S^{ij}]f&=\sum_{I,J}\pair{b}{h_I}\pair{f}{h_J}[h_I,S^{ij}]h_J\\
&=\sum_{I,J}\pair{b}{h_I}\pair{f}{h_J}\left(h_IS^{ij}h_J-S^{ij}(h_Ih_J)\right)=:I+II,
\end{split}
\]
where in the following $I$ and $II$ will be referred to as first term and second term, respectively. In order to further organize the sum and extract the correct paraproduct structure, even in the simplest one-parameter case, one needs to divide up the sum into many different parts, depending on the relative sizes of $I,J$.

\subsection{Cancellative dyadic shift $S^{ij}$}
Let's first look at the case when $S^{ij}$ is cancellative, meaning that all the Haar functions appearing are cancellative. Hence,
\[
[b,S^{ij}]f=\sum_{I,J}\pair{b}{h_I}\pair{f}{h_J}\left(h_I\sum_{J'\subset J^{(i)}}^{(j)}a_{JJ'J^{(i)}}h_{J'}-\sum_{K}\sum_{I'',J''\subset K}^{(i,j)}a_{I''J''K}\pair{h_Ih_J}{h_{I''}}h_{J''}\right).
\] 

First, we claim that it suffices to consider the part $I\subset J^{(i)}$. Indeed, it is obvious that when $I\cap J^{(i)}=\emptyset$, both terms in the parentheses are zero. Furthermore, by the cancellation structure of the commutator, when $I\supsetneq J^{(i)}$, the term $[h_I,S^{ij}]h_J$ is also zero. To see this, as $h_I$ is constant on $J^{(i)}$, fixing an arbitrary $x_0\in J^{(i)}$ implies
\[
h_IS^{ij}h_J-S^{ij}(h_Ih_J)=h_I(x_0)S^{ij}h_J-S^{ij}(h_I(x_0)h_J)=0.
\]
Note that for the case $(i,j)\neq (0,0)$, this is the only part of the proof where one needs the particular cancellation of the commutator structure.

Next, we represent the first term and the second term separately.

\subsubsection{First term}

Based on the discussion above, for any $i,j$, the first term containing $h_IS^{ij}h_J$ is equal to
\[
\sum_{J}\sum_{I: I\subset J^{(i)}}\pair{b}{h_I}\pair{f}{h_J}h_I\sum_{\substack{J': J'\subset J^{(i)}\\ \ell(J')=2^{i-j}\ell(J)}}a_{JJ'J^{(i)}}h_{J'}.
\]

Introducing index $K=J^{(i)}$ allows us to rewrite this as
\[
\begin{split}
&\sum_K\sum_{J: J\subset K}^{(i)}\sum_{I: I\subset K}\pair{b}{h_I}\pair{f}{h_J}h_I\sum_{J': J'\subset K}^{(j)}a_{JJ'K}h_{J'}\\
&=\sum_I\pair{b}{h_I}h_I\left(\sum_{K: K\supset I}\sum_{J: J\subset K}^{(i)}\sum_{J': J'\subset K}^{(j)}a_{JJ'K}\pair{f}{h_J}h_{J'}\right).
\end{split}
\]

Comparing the inner parentheses to the definition of $S^{ij}$ suggests that the expression above is equal to
\[
\begin{split}
&\sum_{I}\pair{b}{h_I}h_I\sum_{J': J'^{(j)}\supset I}\pair{S^{ij}f}{h_{J'}}h_{J'}\\
&=\sum_I\sum_{J': J'\supsetneq I}\pair{b}{h_I}\pair{S^{ij}f}{h_{J'}}h_Ih_{J'}+\sum_I\sum_{J': J'\subset I\subset J'^{(j)}}\pair{b}{h_I}\pair{S^{ij}f}{h_{J'}}h_Ih_{J'}=:I+II.
\end{split}
\]

Note that there are only part $I$ and $II$ left because of the supports of Haar functions. For part $I$, one writes
\[
\begin{split}
&I=\sum_I\pair{b}{h_I}h_I\left(\sum_{J': J'\supsetneq I}\pair{S^{ij}f}{h_{J'}}h_{J'}\right)=\sum_I\pair{b}{h_I}h_I\pair{S^{ij}f}{h_I^1}h_I^1\\
&=\sum_I\pair{b}{h_I}\pair{S^{ij}f}{h_I^1}h_I |I|^{-1/2},
\end{split}
\]
which is of type $B_0(b,S^{ij}f)$. In order to deal with part $II$, observe that it can be decomposed into finitely many pieces depending on the relative sizes of $I$ and $J'$, i.e.
\[
\begin{split}
&II=\sum_{k=0}^{j}\sum_{J'}\pair{b}{h_{J'^{(k)}}}\pair{S^{ij}f}{h_{J'}}h_{J'^{(k)}}h_{J'}\\
&=\sum_{k=0}^{j}\sum_{J'}\beta_{J'}\pair{b}{h_{J'^{(k)}}}\pair{S^{ij}f}{h_{J'}}h_{J'}|J'^{(k)}|^{-1/2}=\sum_{k=0}^{j}B_k(b,S^{ij}f),
\end{split}
\]
where $\beta_{J'}\in\{1,-1\}$ and $0\leq k\leq j$. Note that the sum at the end contains only $1+j\leq 1+\max(i,j)$ terms. Therefore, the representation of the first term is demonstrated.

\subsubsection{Second term}
Now we turn to the second term that contains $S^{ij}(h_Ih_J)$. Due to the supports of Haar functions, this part is nontrivial only when $I\cap J\neq\emptyset$. Hence, one can split this term into three parts: $I\subsetneq J$, $I=J$, and $J\subsetneq I\subset J^{(i)}$.

For $I\subsetneq J$, note that the second term becomes
\[
\begin{split}
S^{ij}(\sum_{I\subsetneq J}\pair{b}{h_I}\pair{f}{h_J}h_Ih_J)&=S^{ij}(\sum_I\pair{b}{h_I}h_I\sum_{J: J\supsetneq I}\pair{f}{h_J}h_J)\\
&=S^{ij}(\sum_I\pair{b}{h_I}h_I\pair{f}{h_I^1}h_I^1)\\
&=S^{ij}(\sum_I\pair{b}{h_I}\pair{f}{h_I^1}h_I|I|^{-1/2}),
\end{split}
\]
which is $S^{ij}(B_0(b,f))$. 

As the diagonal part $I=J$ is obviously of the form $S^{ij}(B_0(b,f))$ already, we move on to the last piece $J\subsetneq I\subset J^{(i)}$, which can be written as
\[
S^{ij}(\sum_J\sum_{I: J\subsetneq I\subset J^{(i)}}\pair{b}{h_I}\pair{f}{h_J}h_Ih_J).
\]

Observe that what's inside the parentheses is of an almost identical form as part $II$ that appeared at the end of the discussion of the first term except that $j$ is changed to $i$ and that $f$ takes the place of $S^{ij}f$. Hence, the same reasoning implies that it is a sum of at most $i\leq \max(i,j)$ terms of $S^{ij}(B_k(b,f))$, $1\leq k\leq i$. This proves the representation of the second term as well as completes the discussion of the case when $S^{ij}$ is cancellative.

\subsection{Noncancellative dyadic shift $S^{00}$}\label{Sec3}

It suffices to assume that 
\[
S^{00}f=\sum_I a_I\pair{f}{h_I^1}h_I,
\]
where $a_I:=\pair{a}{h_I}|I|^{-1/2}$ with $\|a\|_{BMO}\leq 1$. Because if we switch the positions of cancellative and noncancellative Haar functions, what we obtain is none other than its adjoint. Moreover, for the Haar expansion
\[
[b,S^{00}]f=\sum_{I,J}\pair{b}{h_I}\pair{f}{h_J}[h_I,S^{00}]h_J,
\]
it is not hard to see, according to a discussion similar to the one at the beginning of the case $(i,j)\neq (0,0)$, that one needs only to consider the part $I\subset J$ thanks to the commutator structure. We then split the sum into two parts: $I\subsetneq J$ and $I=J$.

\subsubsection{Part $I\subsetneq J$}

To decompose this part, once again we consider the first term containing $h_IS^{00}h_J$ and the second term containing $S^{00}(h_Ih_J)$ separately, without need to exploit more of the cancellation of the commutator. The second term can be dealt with exactly the same as how we treated the $I\subsetneq J$ part of the second term in the case $(i,j)\neq (0,0)$, which we omit. To study the first term, one observes that for any $h_J$, 
\[
S^{00}h_J=\sum_{I\subsetneq J}a_I\pair{h_J}{h_I^1}h_I=\sum_{I\subsetneq J}a_I|I|^{1/2}h_Ih_J.
\]

Hence, the first term becomes
\[
\sum_J\sum_{I, I'\subsetneq J}\pair{b}{h_I}h_I\pair{f}{h_J}a_{I'}|I'|^{1/2}h_{I'}h_J=\sum_J\sum_{I\subset I'\subsetneq J}+\sum_J\sum_{I'\subsetneq I\subsetneq J}=:I+II.
\]

One writes
\[
\begin{split}
I&=\sum_{I}\pair{b}{h_I}h_I\left(\sum_{I': I\subset I'}\sum_{J: I'\subsetneq J}a_{I'}\pair{f}{h_J}h_J|I'|^{1/2}h_{I'}\right)\\
&=\sum_I\pair{b}{h_I}h_I\left(\sum_{I': I\subset I'}a_{I'}|I'|^{1/2}h_{I'}\pair{f}{h_{I'}^1}h_{I'}^1\right)\\
&=\sum_I\pair{b}{h_I}h_I\left(\sum_{I': I\subset I'}a_{I'}\pair{f}{h_{I'}^1}h_{I'}\right)\\
&=\sum_I\pair{b}{h_I}h_I\left(\sum_{I': I\subset I'}\pair{S^{00}f}{h_{I'}}h_{I'}\right)\\
&=\sum_{I}\pair{b}{h_I}h_I\pair{S^{00}f}{h_I}h_I+\sum_I\pair{b}{h_I}h_I\pair{S^{00}f}{h_I^1}h_I^1\\
&=\sum_I\beta_I\pair{b}{h_I}\pair{S^{00}f}{h_I}h_I^\epsilon|I|^{-1/2}+\sum_I\pair{b}{h_I}\pair{S^{00}f}{h_I^1}h_I|I|^{-1/2},
\end{split}
\]
which is the sum of two $B_0(b,S^{00}f)$ with $\beta_I\in\{1,-1\}$.

To deal with part $II$, observe that
\[
II=\sum_{I'\subsetneq I}\pair{b}{h_I}h_Ia_{I'}|I'|^{1/2}h_{I'}\pair{f}{h_I^1}h_I^1,
\]
by first summing over index $J$. Thus,
\[
\begin{split}
II&=\sum_{I'}a_{I'}|I'|^{1/2}h_{I'}\left(\sum_{I: I\supsetneq I'}\pair{b}{h_I}|I|^{-1/2}\pair{f}{h_I^1}h_I\right)\\
&=:\sum_{I'}a_{I'}|I'|^{1/2}h_{I'}\sum_{I: I\supsetneq I'}\pair{S_bf}{h_I}h_I\\
&=\sum_{I'}a_{I'}\pair{S_bf}{h_{I'}^1}h_{I'}=S^{00}(S_bf),
\end{split}
\]
where the operator $S_bf:=\sum_I\pair{b}{h_I}|I|^{-1/2}\pair{f}{h_I^1}h_I$ is a classical paraproduct $B_0(b,f)$, and this completes the discussion of part $I\subsetneq J$.

\subsubsection{Part $I=J$}

In this special case, what we try to decompose becomes
\begin{equation}\label{special}
\sum_I\sum_{\epsilon,\epsilon'\in\{0,1\}^d\setminus\{\vec{1}\}}\pair{b}{h_I^\epsilon}\pair{f}{h_I^{\epsilon'}}\left(h_I^\epsilon S^{00}h_I^{\epsilon'}-S^{00}(h_I^\epsilon h_I^{\epsilon'})\right).
\end{equation}
Here, in order to avoid possible confusion, we wrote out the sum over index $\epsilon,\epsilon'$ explicitly. Recall that for each cube $I$, there are $2^d$ different Haar functions associated: $\{h_I^\epsilon\}$, $\epsilon\in\{0,1\}^d$, and the Haar function is noncancellative if and only if $\epsilon=\vec{1}$. First, it is useful to observe that if $\epsilon\neq\epsilon'$, $[h_{I}^{\epsilon},S^{00}]h_{I}^{\epsilon'}=0$. Indeed, for any fixed $I$ and $\epsilon,\epsilon'$,
\[
h_I^{\epsilon}S^{00}h_I^{\epsilon'}=\sum_{J: J\subsetneq I} a_J|J|^{1/2}h_J(h_I^\epsilon h_I^{\epsilon'}),
\]
and
\[
S^{00}(h_I^\epsilon h_I^{\epsilon'})=\sum_{J: J\supset I}a_J|J|^{-1/2}h_J\left(\int_I h_I^\epsilon h_I^{\epsilon'}\right)+\sum_{J: J\subsetneq I} a_J|J|^{1/2}h_J(h_I^\epsilon h_I^{\epsilon'}).
\]

As a result of cancellation and the fact that $\int_I h_I^\epsilon h_I^{\epsilon'}$ is nonzero if and only if $\epsilon=\epsilon'$, i.e. $h_I^\epsilon h_I^{\epsilon'}=|I|^{-1}\chi_I$, $[h_{I}^{\epsilon},S^{00}]h_{I}^{\epsilon'}\neq 0$ only when $\epsilon=\epsilon'$. Therefore, one can safely suppress the dependence on $\epsilon$ when studying this part of the sum. 

Furthermore, it is easily seen that the second term containing $S^{00}(h_Ih_I)$ here can be estimated exactly the same as before, it thus suffices to deal with the first term containing $h_IS^{00}h_I$, which is equal to
\[
\sum_I\pair{b}{h_I}\pair{f}{h_I}h_IS^{00}h_I=\sum_{I}\pair{b}{h_I}\pair{f}{h_I}|I|^{-1}\sum_{J: J\subsetneq I}\pair{a}{h_J}h_J=P(b,a,f),
\]
hence the proof is complete.

\section{Proof of the main theorem}

In this section, we present the proof of the main theorem in the general setting by iterating the one-parameter result, i.e. Theorem \ref{decomp}, in the previous section. For the sake of brevity, we consider the bi-parameter case as an example, while the strategy can be easily generalized to work for arbitrarily many parameters. The main idea is to show that the commutator can be represented as a finite linear combination of the bi-parameter analogs of terms in Theorem \ref{decomp}, for which one needs to define and estimate the following new bi-parameter operators, including all the possible "tensor products" of the one-parameter operators $B_k$ and $P$.

\begin{lem}\label{bipara}
Given $b\in BMO_{prod}(\R^n\times\R^m)$ and integers $k,l\geq 0$, define the following operators
\[
B_{k,l}(b,f)=\sum_{I_1,I_2}\beta_{I_1I_2}\pair{b}{h_{I_1^{(k)}}\otimes u_{I_2^{(l)}}}\pair{f}{h_{I_1}^{\epsilon_1}\otimes u_{I_2}^{\epsilon_2}}h_{I_1}^{\epsilon_1'}\otimes u_{I_2}^{\epsilon_2'}|I_1^{(k)}|^{-1/2}|I_2^{(l)}|^{-1/2},
\]
where $\beta_{I_1I_2}$ is a sequence satisfying $|\beta_{I_1I_2}|\leq 1$. When $k>0$, all the Haar functions in the first variable are cancellative, while when $k=0$, there is at most one of $h_{I_1}^{\epsilon_1}, h_{I_1}^{\epsilon_1'}$ being noncancellative. The same assumption goes for the second variable. Then, $\|B_{k,l}(b,f)\|_{L^2}\lesssim\|b\|_{BMO_{prod}}\|f\|_{L^2}$ with a constant independent of $k,l$.
\end{lem}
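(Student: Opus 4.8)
The plan is to mimic the one-parameter argument of Lemma \ref{paraB1}, replacing the single dyadic square function by a bi-parameter (tensor product) square function, and treating the two parameters essentially independently since $B_{k,l}$ factors as a tensor of the two one-parameter mechanisms. First I would pass to the weak formulation: for $g\in L^2$, write
\[
\pair{B_{k,l}(b,f)}{g}=\pair{b}{\sum_{I_1,I_2}\beta_{I_1I_2}\pair{f}{h_{I_1}^{\epsilon_1}\otimes u_{I_2}^{\epsilon_2}}\pair{g}{h_{I_1}^{\epsilon_1'}\otimes u_{I_2}^{\epsilon_2'}}\,h_{I_1^{(k)}}\otimes u_{I_2^{(l)}}|I_1^{(k)}|^{-1/2}|I_2^{(l)}|^{-1/2}},
\]
so that by $H^1(\R^n\times\R^m)$--$BMO_{prod}$ duality it suffices to bound the product-$H^1$ norm of the inner function by $\|f\|_{L^2}\|g\|_{L^2}$, i.e.\ to bound the $L^1$ norm of its bi-parameter square function
\[
S^{(1)}S^{(2)}\Big(\sum_{I_1,I_2}\beta_{I_1I_2}\pair{f}{h_{I_1}^{\epsilon_1}\otimes u_{I_2}^{\epsilon_2}}\pair{g}{h_{I_1}^{\epsilon_1'}\otimes u_{I_2}^{\epsilon_2'}}\,h_{I_1^{(k)}}\otimes u_{I_2^{(l)}}|I_1^{(k)}|^{-1/2}|I_2^{(l)}|^{-1/2}\Big),
\]
where $S^{(1)}$, $S^{(2)}$ are the dyadic square functions in the first and second variable respectively. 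Here I must be slightly careful: when $k=0$ (resp.\ $l=0$) one of the two Haar functions in that variable may be noncancellative, so the ``square function'' in that variable must be the Littlewood--Paley operator that also retains the non-cancellative ($\epsilon=\vec 1$) piece — but this is exactly the situation already handled in the classical paraproduct bound (Theorem \ref{para}), and the cancellative case $k,l>0$ is the one where the uniform-in-$k$ estimate is genuinely needed.

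Next I would compute the square function in each variable pointwise. Grouping the $I_1$-sum over $I_1$ with $I_1^{(k)}=J_1$ and the $I_2$-sum over $I_2$ with $I_2^{(l)}=J_2$, and using $|\,\cdot\,|_{\ell^2}\le|\,\cdot\,|_{\ell^1}$ together with Cauchy--Schwarz in $I_1$ and in $I_2$ exactly as in Lemma \ref{paraB1}, I expect to obtain the pointwise bound
\[
S^{(1)}S^{(2)}\big(\textstyle\sum_{I_1,I_2}\cdots\big)(x)\ \lesssim\ \big(S^{(1,k)}S^{(2,l)}f\big)(x)\cdot\big(S^{(1,k)}S^{(2,l)}g\big)(x),
\]
where the ``fattened'' bi-parameter square function is
\[
S^{(1,k)}S^{(2,l)}f:=\Big(\sum_{J_1,J_2}\ \sum_{\substack{I_1^{(k)}=J_1\\ I_2^{(l)}=J_2}}|\pair{f}{h_{I_1}^{\epsilon_1}\otimes u_{I_2}^{\epsilon_2}}|^2\,\frac{\chi_{J_1}}{|J_1|}\frac{\chi_{J_2}}{|J_2|}\Big)^{1/2}.
\]
Then Cauchy--Schwarz in $L^2\times L^2$ reduces everything to the single claim that $S^{(1,k)}S^{(2,l)}:L^2(\R^n\times\R^m)\to L^2$ with norm independent of $k$ and $l$. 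To prove that, I would iterate the one-parameter trick of Lemma \ref{paraB1} one variable at a time: freezing the second variable, set $\alpha_{J_1,I_2}=(\sum_{I_1^{(k)}=J_1}|\pair{f}{h_{I_1}^{\epsilon_1}\otimes u_{I_2}^{\epsilon_2}}|^2)^{1/2}$ and $F(x_1,\cdot)=\sum_{J_1}\alpha_{J_1,\cdot}\,h_{J_1}(x_1)$ (using the non-cancellative Haar function in the $x_1$-variable when $k=0$); then by the $L^2$-boundedness of the dyadic square function in $x_1$ applied for a.e.\ $x_2$, followed by the analogous step in $x_2$, one gets
\[
\|S^{(1,k)}S^{(2,l)}f\|_{L^2}^2\ \lesssim\ \sum_{J_1}\sum_{\substack{I_1^{(k)}=J_1}}\ \sum_{J_2}\sum_{\substack{I_2^{(l)}=J_2}}|\pair{f}{h_{I_1}^{\epsilon_1}\otimes u_{I_2}^{\epsilon_2}}|^2\ =\ \|f\|_{L^2}^2,
\]
the last equality because each pair $(I_1,I_2)$ is hit exactly once and $\{h_{I_1}^{\epsilon_1}\otimes u_{I_2}^{\epsilon_2}\}$ is (part of) an orthonormal family. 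Combining the pieces yields $\|B_{k,l}(b,f)\|_{L^2}\lesssim\|b\|_{BMO_{prod}}\|f\|_{L^2}$ uniformly in $k,l$.

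The main obstacle I anticipate is not the iteration of the square-function estimate itself but the bookkeeping around the non-cancellative cases: when $k=0$ one cannot simply use orthonormality/square-function boundedness for the $\epsilon_1=\vec1$ component, and one must instead absorb that component into a genuine product paraproduct and invoke Theorem \ref{para}; similarly for $l=0$, and for the mixed situation $k=0<l$. Concretely, the clean ``tensor-product square function'' argument above applies verbatim only when $k,l>0$; for the degenerate values one should first split off the paraproduct part in the offending variable(s) and then run the square-function argument in the remaining (cancellative) variable, so that the estimate in the end rests on Theorem \ref{para} in those variables and on the uniform square-function bound in the others. Since in the application coming from Theorem \ref{decomp} one in fact only needs the fully cancellative case when $k,l>0$ (cf.\ the Remark after Theorem \ref{decomp}), this degenerate bookkeeping is the only genuinely delicate point, and it is handled exactly as in the one-parameter Lemma \ref{paraB1} together with the known multi-parameter paraproduct bounds.
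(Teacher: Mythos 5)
Your treatment of the fully cancellative case $k,l>0$ is essentially the paper's: pass to duality with $BMO_{prod}$, majorize by a bi-parameter square function, re-index by the $k$-th and $l$-th ancestors, apply $\|\cdot\|_{\ell^2}\le\|\cdot\|_{\ell^1}$ and Cauchy--Schwarz, and reduce to a uniform $L^2$ bound for the fattened square function $S^{(1,k)}S^{(2,l)}$. That part is correct and matches the paper.

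There are two problems with how you handle the degenerate cases, and the second one is a real gap. First, your claim that the application in Theorem \ref{decomp} only requires the fully cancellative case $k,l>0$ is incorrect. The Remark after Theorem \ref{decomp} only asserts cancellativity in the $B_k$ factor of a \emph{single} parameter when $k>0$; in the iterated (bi-parameter) decomposition one routinely produces $B_{k,l}$ with, say, $k>0$ (cancellative in variable 1) and $l=0$ (one noncancellative Haar in variable 2). The paper singles out exactly this mixed case as the one requiring proof, precisely because the fully cancellative case is already covered by iterating the one-parameter argument, and the fully classical case $k=l=0$ is Theorem \ref{para}.

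Second, your proposed fix for the mixed case — ``split off the paraproduct part in the offending variable and invoke Theorem \ref{para} there, then run the square-function argument in the remaining variable'' — does not make sense as stated, because $b\in BMO_{prod}$ is not a tensor of one-variable BMO functions; you cannot apply a one-variable BMO bound in one coordinate while doing a square-function pairing in the other. The $H^1$--$BMO_{prod}$ duality has to be exercised jointly in both variables. What the paper actually does for, e.g., $k=0$ with $h_{I_1}^{\epsilon_1}=h_{I_1}^1$ noncancellative and $l>0$, is replace the missing square-function piece in the first variable by a pointwise average $\ave{\pair{f}{u_{J_2}}_2}_{I_1}$, dominate it by $M_1$, carry that maximal operator through the Cauchy--Schwarz step, and then close with the Fefferman--Stein vector-valued maximal inequality to get a genuine hybrid maximal-square function bound in $L^2$. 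This mechanism — a hybrid $M_1$ combined with the ancestor square function in the other variable, closed via Fefferman--Stein — is precisely what your proposal is missing; without it the degenerate case is not handled.
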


In the above, we use $u_{I_2}$ to denote Haar functions in the second variable, for any dyadic cube $I_2\subset \mathbb{R}^m$. Note that when $k=l=0$, $B_{k,l}$ becomes the classical bi-parameter $B_0$ we have seen at the end of Section \ref{prep}. When all the Haar functions are cancellative, the proof of the lemma proceeds exactly the same as its one-parameter counterpart, except that one needs bi-parameter dyadic square function as majorization instead. Therefore in the following, we will only prove the lemma assuming that $k=0, l>0$, and $h_{I_1}^{\epsilon_1}=h_{I_1}^1$ is the only noncancellative Haar. Note that in the setting of arbitrarily many parameters, parallel results still hold.

\begin{proof}
We are going to follow the strategy in the proof of Lemma \ref{paraB1} and use hybrid maximal-square functions as majorization.

Pairing $B_{0,l}(b,f)$ with a normalized $L^2$ function $g$ and applying the product $H^1$-$BMO$ duality, it suffices to show that
\[
\|SS(\sum_{I_1,I_2}\beta_{I_1I_2}\pair{f}{h_{I_1}^1\otimes u_{I_2}}\pair{g}{h_{I_1}\otimes u_{I_2}}h_{I_1}\otimes u_{I_2^{(l)}}|I_1|^{-1/2}|I_2^{(l)}|^{-1/2})\|_{L^1}\lesssim \|f\|_{L^2},
\]
where $SS$ is the dyadic double square function whose $L^1$ norm characterizes product $H^1$.

To see this, one calculates
\[
\begin{split}
&SS(\sum_{I_1,I_2}\beta_{I_1I_2}\pair{f}{h_{I_1}^1\otimes u_{I_2}}\pair{g}{h_{I_1}\otimes u_{I_2}}h_{I_1}\otimes u_{I_2^{(l)}}|I_1|^{-1/2}|I_2^{(l)}|^{-1/2})^2\\
&=\sum_{I_1,I_2}\left(\sum_{J_2: J_2^{(l)}=I_2}\pair{f}{h_{I_1}^1\otimes u_{J_2}}\pair{g}{h_{I_1}\otimes u_{J_2}}|I_1|^{-1/2}|I_2|^{-1/2}\right)^2\frac{\chi_{I_1}\otimes\chi_{I_2}}{|I_1||I_2|}\\
&\leq \sum_{I_1}\left(\sum_{I_2}\sum_{J_2: J_2^{(l)}=I_2}\sup_{I_1}(\ave{\pair{f}{u_{J_2}}_2}_{I_1})\pair{g}{h_{I_1}\otimes u_{J_2}}\frac{\chi_{I_2}}{|I_2|}\right)^2\frac{\chi_{I_1}}{|I_1|},
\end{split}
\]
where the last inequality follows from $\|\cdot\|_{\ell^2}\leq\|\cdot\|_{\ell^1}$, and $\ave{\cdot}_{I_1}$ denotes the average value over $I_1$. Then the above is controlled by
\[
\sum_{I_1}\left(\sum_{I_2}\sum_{J_2: J_2^{(l)}=I_2}M_1(\pair{f}{u_{J_2}}_2)|\pair{g}{h_{I_1}\otimes u_{J_2}}|\frac{\chi_{I_2}}{|I_2|}\right)^2\frac{\chi_{I_1}}{|I_1|},
\]
where $M_1$ is the Hardy-Littlewood maximal function in the first variable. Next, Cauchy-Schwarz inequality implies that
\[
\begin{split}
&\leq \sum_{I_1}\left(\sum_{I_2}\sum_{J_2: J_2^{(l)}=I_2}M_1(\pair{f}{u_{J_2}}_2)^2\frac{\chi_{I_2}}{|I_2|}\right)\left(\sum_{I_2}\sum_{J_2: J_2^{(l)}=I_2}|\pair{g}{h_{I_1}\otimes u_{J_2}}|^2\frac{\chi_{I_2}}{|I_2|}\right)\frac{\chi_{I_1}}{|I_1|}\\
&=\left(\sum_{I_2}\sum_{J_2: J_2^{(l)}=I_2}M_1(\pair{f}{u_{J_2}}_2)^2\frac{\chi_{I_2}}{|I_2|}\right)\left(\sum_{I_1}\sum_{I_2}\sum_{J_2: J_2^{(l)}=I_2}|\pair{g}{h_{I_1}\otimes u_{J_2}}|^2\frac{\chi_{I_1}\otimes\chi_{I_2}}{|I_1||I_2|}\right)=:I\cdot II.
\end{split}
\]

$II$ could be written as the square of $SS$ acting on a normalized $L^2$ function, similarly as the last part of the proof of Lemma \ref{paraB1}. For $I$, Fefferman-Stein inequality implies that
\[
\begin{split}
\|I^{1/2}\|_{L^2(\mathbb{R}^n\times\mathbb{R}^m)}&=\left(\int_{\mathbb{R}^m}\|(\sum_{I_2}\sum_{J_2: J_2^{(l)}=I_2}M_1(\pair{f}{u_{J_2}}_2)^2\frac{\chi_{I_2}}{|I_2|})^{1/2}\|_{L^2(\mathbb{R}^n)}^2\,dx_2\right)^{1/2}\\
&\lesssim \left(\int_{\mathbb{R}^m}\|(\sum_{I_2}\sum_{J_2: J_2^{(l)}=I_2}|\pair{f}{u_{J_2}}_2|^2\frac{\chi_{I_2}}{|I_2|})^{1/2}\|_{L^2(\mathbb{R}^n)}^2\,dx_2\right)^{1/2}\\
&\lesssim\left(\int_{\mathbb{R}^m}\|f(\cdot,x_2)\|_{L^2(\mathbb{R}^n)}^2\,dx_2\right)^{1/2}=\|f\|_{L^2(\mathbb{R}^n\times\mathbb{R}^m)},
\end{split}
\]
where once again the last inequality is due to the same argument in the last part of the proof of Lemma \ref{paraB1}, thus the proof is complete.

\end{proof}

\begin{lem}\label{biparaPP}
Given $b,a\in BMO_{prod}(\mathbb{R}^n\times\mathbb{R}^m)$, define
\[
PP(b,a,f):=\sum_{I_1,I_2}\pair{b}{h_{I_1}\otimes u_{I_2}}\pair{f}{h_{I_1}\otimes u_{I_2}}|I_1|^{-1}|I_2|^{-1}\sum_{J_1: J_1\subsetneq I_1}\sum_{J_2: J_2\subsetneq I_2}\pair{a}{h_{J_1}\otimes u_{J_2}}h_{J_1}\otimes u_{J_2},
\]
and let $PP_1$ be its partial adjoint in the first variable with $b,a$ fixed. Then,
\begin{equation}\label{PP}
\|PP(b,a,f)\|_{L^2}\lesssim\|b\|_{BMO_{prod}}\|a\|_{BMO_{prod}}\|f\|_{L^2},
\end{equation}
\begin{equation}\label{PP1}
\|PP_1(b,a,f)\|_{L^2}\lesssim \|b\|_{BMO_{prod}}\|a\|_{BMO_{prod}}\|f\|_{L^2}.
\end{equation}
\end{lem}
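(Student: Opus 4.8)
The plan is to prove Lemma~\ref{biparaPP} by following the strategy of Lemma~\ref{oneparaP}, but now exploiting the product $H^1$--$BMO$ duality and the double square function characterization of product $H^1$, together with iterated maximal functions in each parameter. Pairing $PP(b,a,f)$ against a normalized $g\in L^2$, we may move $b$ out using the product $H^1$--$BMO$ duality, so that it suffices to bound in $L^1$ the double square function
\[
SS\Big(\sum_{I_1,I_2}\pair{f}{h_{I_1}\otimes u_{I_2}}|I_1|^{-1}|I_2|^{-1}h_{I_1}\otimes u_{I_2}\sum_{J_1\subsetneq I_1}\sum_{J_2\subsetneq I_2}\pair{a}{h_{J_1}\otimes u_{J_2}}\pair{g}{h_{J_1}\otimes u_{J_2}}\Big)
\]
by $\|f\|_{L^2}$. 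Computing $SS$ of this expression produces, for each pair $(I_1,I_2)$, the coefficient $|\pair{f}{h_{I_1}\otimes u_{I_2}}|^2|I_1|^{-2}|I_2|^{-2}$ times the square of the inner double sum $\sum_{J_1\subsetneq I_1}\sum_{J_2\subsetneq I_2}\pair{a}{h_{J_1}\otimes u_{J_2}}\pair{g}{h_{J_1}\otimes u_{J_2}}$, weighted by $\chi_{I_1}\otimes\chi_{I_2}/(|I_1||I_2|)$.

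The key step is to estimate the inner double sum over $J_1\subsetneq I_1$, $J_2\subsetneq I_2$. Writing it as $\pair{\sum_{J_1\subsetneq I_1,J_2\subsetneq I_2}\pair{a}{h_{J_1}\otimes u_{J_2}}h_{J_1}\otimes u_{J_2}}{g\,\chi_{I_1}\otimes\chi_{I_2}}$, I would apply H\"older's inequality with exponents $p'$ and $p$ (for some $1<p<2$) in the product setting, then control the $L^{p'}$ norm of the truncated expansion of $a$ by its double square function and invoke the product John--Nirenberg inequality to get a bound of the form $\|a\|_{BMO_{prod}}\,|I_1|^{1/p'}|I_2|^{1/p'}$ times $\|g\,\chi_{I_1}\otimes\chi_{I_2}\|_{L^p}$; the latter equals $|I_1|^{1/p}|I_2|^{1/p}\big(\ave{|g|^p}_{I_1\times I_2}\big)^{1/p}$. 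Substituting back, the $|I_1|,|I_2|$ powers cancel the $|I_1|^{-2}|I_2|^{-2}$, and we are left with $\|a\|_{BMO_{prod}}^2\sum_{I_1,I_2}|\pair{f}{h_{I_1}\otimes u_{I_2}}|^2\big(\ave{|g|^p}_{I_1\times I_2}\big)^{2/p}\chi_{I_1}\otimes\chi_{I_2}/(|I_1||I_2|)$ under the square root. Bounding $\big(\ave{|g|^p}_{I_1\times I_2}\big)^{2/p}$ pointwise by the strong maximal function $M_S(|g|^p)^{2/p}$ (the iterated, hence strong, maximal operator) pulls it out, leaving $M_S(|g|^p)^{1/p}\cdot SS(f)$. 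Finally, $\|M_S(|g|^p)^{1/p}\,SS(f)\|_{L^1}\le \|M_S(|g|^p)^{1/p}\|_{L^2}\|SS(f)\|_{L^2}\lesssim \|g\|_{L^2}\|f\|_{L^2}$ by Cauchy--Schwarz, the strong maximal theorem on $L^{2/p}$, and the $L^2$ boundedness of $SS$.

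For the partial adjoint $PP_1$, the structure is the same: the roles of $h_{I_1}$ and $h_{J_1}$ in the first variable are swapped (so $g$ and $a$ get truncated/tested against cubes in reversed order in parameter one only, while parameter two is unchanged). I would redo the duality argument, using the product John--Nirenberg and H\"older exactly as above but with the first-parameter truncation reversed; the key point is that the product BMO norm of $a$ is symmetric enough that the same square-function estimate survives, and one still extracts a strong maximal function of $g$ in the end. Alternatively, one can observe that $PP_1$ has the same schematic form — a sum over two "scales" of cubes with a nested truncation — and the estimate is stable under this partial transposition.

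The main obstacle I anticipate is the correct deployment of the \emph{product} John--Nirenberg inequality to bound $\|\sum_{J_1\subsetneq I_1,J_2\subsetneq I_2}\pair{a}{h_{J_1}\otimes u_{J_2}}h_{J_1}\otimes u_{J_2}\|_{L^{p'}(I_1\times I_2)}$ by $\|a\|_{BMO_{prod}}|I_1\times I_2|^{1/p'}$: unlike the one-parameter case, the truncated expansion of a product-BMO function restricted to a rectangle is not simply a BMO function with comparable norm, so one must use the rectangular (or Chang--Fefferman) form of John--Nirenberg, or equivalently the $L^p$-boundedness of the double square function applied to the restricted expansion, carefully tracking that the constant is genuinely the product BMO norm and not a larger "rectangular BMO" quantity. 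Handling this correctly — and making sure the strong maximal function (rather than a single-parameter maximal function) is what naturally appears — is where the delicate product-theory input is needed; the rest is a routine adaptation of the one-parameter argument.
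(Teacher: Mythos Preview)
Your treatment of $PP$ is correct and matches the paper: product $H^1$--$BMO$ duality, the Chang--Fefferman John--Nirenberg inequality applied to the truncated expansion of $a$ over $J_1\times J_2\subset I_1\times I_2$, and then $SS(f)\cdot M_S(|g|^p)^{1/p}$ as the majorant. The obstacle you flag is a non-issue: the Chang--Fefferman inequality applies to any open set $\Omega$, in particular to a rectangle, and gives exactly the bound $\|a\|_{BMO_{prod}}|I_1\times I_2|^{1/p'}$ you need.

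The gap is in your treatment of $PP_1$. After swapping $h_{I_1}$ and $h_{J_1}$, the inner sum becomes
\[
\sum_{J_1\subsetneq I_1}\sum_{J_2\subsetneq I_2}\pair{a}{h_{J_1}\otimes u_{J_2}}\,\pair{f}{h_{J_1}\otimes u_{I_2}}\,\pair{g}{h_{I_1}\otimes u_{J_2}},
\]
and this is \emph{not} $\langle(\text{truncated }a),\,g\chi_{I_1\times I_2}\rangle$; the function being paired against the truncated $a$ is the tensor $\pair{f}{u_{I_2}}_2\otimes\pair{g}{h_{I_1}}_1$, which depends on both $I_1$ and $I_2$. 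Applying John--Nirenberg here yields
\[
\|a\|_{BMO_{prod}}\,|I_1||I_2|\,(\langle|\pair{f}{u_{I_2}}_2|^p\rangle_{I_1})^{1/p}(\langle|\pair{g}{h_{I_1}}_1|^p\rangle_{I_2})^{1/p},
\]
so the resulting majorant is a product of two \emph{hybrid} maximal--square functions of the form $\big(\sum_{I_2}M_1(|\pair{f}{u_{I_2}}_2|^p)^{2/p}\chi_{I_2}/|I_2|\big)^{1/2}$ (and the symmetric expression in $g$), not $M_S(|g|^p)^{1/p}\cdot SS(f)$. These hybrid objects are $L^2$-bounded by Fefferman--Stein in one variable followed by Plancherel in the other, and that is what the paper does. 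Your fallback remark that ``the estimate is stable under this partial transposition'' is exactly the kind of reasoning the paper explicitly warns against: $L^2$ boundedness of a bi-parameter operator does \emph{not} in general pass to its partial adjoint, so $PP_1$ requires its own computation.
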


Recall that for a bi-parameter singular integral $T$, its partial adjoint $T_1$ is defined via
\[
\pair{T(f_1\otimes f_2)}{g_1\otimes g_2}=\pair{T_1(g_1\otimes f_2)}{f_1\otimes g_2}.
\]
It is known that the $L^2$ boundedness of $T$ does not imply the $L^2$ boundedness of $T_1$ (see \cite{Jo} or \cite{MO} for a detailed discussion and counterexamples). Hence, in the following, we need to prove the boundedness of $PP$ and $PP_1$ separately. 

\begin{proof}
We first note that the proof of $PP$ is essentially the same as Lemma \ref{oneparaP}. In the bi-parameter setting, one needs to use the double square function $SS$ to characterize product $H^1$ and the strong maximal function $M_S$ as majorization. The key observation is that there holds the following bi-parameter John-Nirenberg inequality (see \cite{CF}):
\[
\|(\sum_{R\subset\Omega}|\pair{a}{h_R}|^2\frac{\chi_R}{|R|})^{1/2}\|_{L^p}\leq\|a\|_{BMO_{prod}}|\Omega|^{1/p},\quad 1<p<\infty,
\]
where $\Omega$ is any open set in $\mathbb{R}^n\times\mathbb{R}^m$ of finite measure, and $R$ denotes dyadic rectangles. It thus easy to verify that a same argument as in Lemma \ref{oneparaP} implies (\ref{PP}).

The estimate of (\ref{PP1}) involves the hybrid maximal-square functions, which we have seen in the proof of Lemma \ref{bipara}. To be specific, let $g\in L^2$ be a normalized test function,
\[
\begin{split}
&\pair{PP_1(b,a,f)}{g}\\
=&\pair{b}{\sum_{I_1,I_2}|I_1|^{-1}|I_2|^{-1}h_{I_1}\otimes u_{I_2}\sum_{J_1: J_1\subsetneq I_1}\sum_{J_2: J_2\subsetneq I_2}\pair{a}{h_{J_1}\otimes u_{J_2}}\pair{f}{h_{J_1}\otimes u_{I_2}}\pair{g}{h_{I_1}\otimes u_{J_2}}}.
\end{split}
\]

Note that by bi-parameter John-Nirenberg inequality,
\[
\begin{split}
&|\sum_{J_1: J_1\subsetneq I_1}\sum_{J_2: J_2\subsetneq I_2}\pair{a}{h_{J_1}\otimes u_{J_2}}\pair{f}{h_{J_1}\otimes u_{I_2}}\pair{g}{h_{I_1}\otimes u_{J_2}}|\\
=&|\sum_{J_1: J_1\subsetneq I_1}\sum_{J_2: J_2\subsetneq I_2}\pair{a}{h_{J_1}\otimes u_{J_2}}\pair{\pair{f}{u_{I_2}}_2\otimes\pair{g}{h_{I_1}}_1}{h_{J_1}\otimes u_{J_2}}|\\
\leq&\|a\|_{BMO_{prod}}|I_1||I_2|(\ave{|\pair{f}{u_{I_2}}_2|^p}_{I_1})^{1/p}(\ave{|\pair{g}{h_{I_1}}_1|^p}_{I_2})^{1/p},
\end{split}
\]
for some $1<p<2$. Hence,
\[
\begin{split}
&SS(\sum_{I_1,I_2}|I_1|^{-1}|I_2|^{-1}h_{I_1}\otimes u_{I_2}\sum_{J_1: J_1\subsetneq I_1}\sum_{J_2: J_2\subsetneq I_2}\pair{a}{h_{J_1}\otimes u_{J_2}}\pair{f}{h_{J_1}\otimes u_{I_2}}\pair{g}{h_{I_1}\otimes u_{J_2}})\\
\leq&\|a\|_{BMO_{prod}}\big(\sum_{I_1,I_2}(\ave{|\pair{f}{u_{I_2}}_2|^p}_{I_1})^{2/p}(\ave{|\pair{g}{h_{I_1}}_1|^p}_{I_2})^{2/p}\frac{\chi_{I_1}\otimes\chi_{I_2}}{|I_1||I_2|}\big)^{1/2}\\
\leq&\|a\|_{BMO_{prod}}\big(\sum_{I_2}M_1(|\pair{f}{u_{I_2}}_2|^p)^{2/p}\frac{\chi_{I_2}}{|I_2|}\big)^{1/2}\big(\sum_{I_1}M_2(|\pair{g}{h_{I_1}}_1|^p)^{2/p}\frac{\chi_{I_1}}{|I_1|}\big)^{1/2}.
\end{split}
\]

The two terms on the last line above can be viewed as generalized hybrid maximal-square functions, whose boundedness is easy to obtain. For example,
\[
\begin{split}
&\|\big(\sum_{I_2}M_1(|\pair{f}{u_{I_2}}_2|^p)^{2/p}\frac{\chi_{I_2}}{|I_2|}\big)^{1/2}\|_{L^2}\\
=&\left(\int_{\mathbb{R}^n}\|\big(\sum_{I_2}M_1(|\pair{f}{u_{I_2}}_2|^p)^{2/p}\frac{\chi_{I_2}}{|I_2|}\big)^{1/2}\|_{L^2(\mathbb{R}^m)}^2\,dx_1\right)^{1/2}\\
=&\left(\int_{\mathbb{R}^n}\sum_{I_2}M_1(|\pair{f}{u_{I_2}}_2|^p)^{2/p}\,dx_1\right)^{1/2}
\lesssim \left(\sum_{I_2}\int_{\mathbb{R}^n}|\pair{f}{u_{I_2}}_2|^2\,dx_1\right)^{1/2}=\|f\|_{L^2}.
\end{split}
\]

Therefore, $\|PP_1(b,a,f)\|_{L^2}\lesssim\|b\|_{BMO_{prod}}\|a\|_{BMO_{prod}}\|f\|_{L^2}$.
\end{proof}

In addition to the above two types of operators, in the bi-parameter setting, a new type of operator that mixes the paraproduct and $P$ arise naturally in our argument. We show that they have the following uniform BMO estimates.

\begin{lem}\label{biparaBP}
Given $b\in BMO_{prod}(\mathbb{R}^n\times\mathbb{R}^m)$, $a^1\in BMO(\mathbb{R}^n)$, and $a^2\in BMO(\mathbb{R}^m)$. For integers $k,l\geq 0$, define
\[
BP_k(b,a^2,f):=\sum_{I_1,I_2}\beta_{I_1}\pair{b}{h_{I_1^{(k)}}\otimes u_{I_2}}\pair{f}{h_{I_1}^{\epsilon_1}\otimes u_{I_2}}|I_1^{(k)}|^{-1/2}|I_2|^{-1}h_{I_1}^{\epsilon_1'}\sum_{J_2: J_2\subsetneq I_2}\pair{a^2}{u_{J_2}}_2u_{J_2},
\]
\[
PB_l(b,a^1,f):=\sum_{I_1,I_2}\beta_{I_2}\pair{b}{h_{I_1}\otimes u_{I_2^{(l)}}}\pair{f}{h_{I_1}\otimes u_{I_2}^{\epsilon_2}}|I_1|^{-1}|I_2^{(l)}|^{-1/2}h_{I_2}^{\epsilon_2'}\sum_{J_1: J_1\subsetneq I_1}\pair{a^1}{h_{J_1}}_1h_{J_1},
\]
where $\beta_{I_1}, \beta_{I_2}$ are sequences satisfying $|\beta_{I_1}, \beta_{I_2}|\leq 1$. When $k>0$, all the Haar functions in the first variable are cancellative, while when $k=0$, there is at most one of $h_{I_1}^{\epsilon_1}, h_{I_1}^{\epsilon_1'}$ being noncancellative. The same assumption goes for the second variable. Then, there holds
\[
\|BP_k(b,a^2,f)\|_{L^2}\lesssim \|b\|_{BMO_{prod}}\|a^2\|_{BMO}\|f\|_{L^2},
\]
\[
\|PB_l(b,a^1,f)\|_{L^2}\lesssim \|b\|_{BMO_{prod}}\|a^1\|_{BMO}\|f\|_{L^2}.
\]
\end{lem}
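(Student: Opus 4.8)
The plan is to prove the two bounds for $BP_k$ and $PB_l$ in a parallel fashion, so I will describe the argument for $BP_k$; the one for $PB_l$ is identical after swapping the roles of the two parameters. The guiding principle is the same as in Lemmas \ref{bipara}, \ref{oneparaP} and \ref{biparaPP}: pair against a normalized $g\in L^2$, invoke product $H^1$--$BMO$ duality (with the double square function $SS$ characterizing product $H^1$), and then estimate $SS$ of the resulting Haar expansion by a product of hybrid maximal--square functions. Concretely, I would first write
\[
\pair{BP_k(b,a^2,f)}{g}=\pair{b}{\Phi},\quad
\Phi=\sum_{I_1,I_2}\beta_{I_1}\pair{f}{h_{I_1}^{\epsilon_1}\otimes u_{I_2}}\pair{g}{h_{I_1}^{\epsilon_1'}\otimes u_{J_2}}\,|I_1^{(k)}|^{-1/2}|I_2|^{-1}h_{I_1^{(k)}}\otimes\!\!\sum_{J_2:J_2\subsetneq I_2}\!\!\pair{a^2}{u_{J_2}}_2\,u_{J_2},
\]
and it suffices to bound $\|SS\,\Phi\|_{L^1}\lesssim\|a^2\|_{BMO}\|f\|_{L^2}\|g\|_{L^2}$.

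The second parameter is handled exactly as in the one-parameter Lemma \ref{oneparaP}: for each fixed $I_2$ the inner sum $\sum_{J_2\subsetneq I_2}\pair{a^2}{u_{J_2}}_2\pair{g}{h_{I_1}^{\epsilon_1'}\otimes u_{J_2}}$ is estimated, via Hölder and the one-parameter John--Nirenberg inequality, by $\|a^2\|_{BMO}\,|I_2|\,(\ave{|\pair{g}{h_{I_1}^{\epsilon_1'}}_1|^{p}}_{I_2})^{1/p}$ for some $1<p<2$. The first parameter, where the $k$-fold ancestor $I_1^{(k)}$ appears, is handled as in Lemma \ref{bipara}: after squaring $SS$, one groups the cubes $I_1$ with fixed ancestor $J_1=I_1^{(k)}$, uses $\|\cdot\|_{\ell^2}\le\|\cdot\|_{\ell^1}$ to pull out the $I_1$-sum, and dominates the local average of $\pair{f}{u_{I_2}}_2$ over $I_1$ by $M_1$ (the Hardy--Littlewood maximal operator in $x_1$). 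A Cauchy--Schwarz in the combined $(I_1,I_2)$ (resp. $(J_1,I_2)$) index then splits the expression into a factor $I$ built from $M_1(\pair{f}{u_{I_2}}_2)$ and $M_2$ (or just $M_1$) applied to the localized averages, and a factor $II$ that is literally $(SS\,G)^2$ for a normalized $G\in L^2$; taking $L^1$ norms and using Cauchy--Schwarz in $L^2$ reduces matters to the $L^2$-boundedness of $SS$ together with Fefferman--Stein, exactly as in the displayed computations closing the proofs of Lemmas \ref{bipara} and \ref{biparaPP}.

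There is one genuinely new wrinkle: because we allow $k=0$ (and then one of $h_{I_1}^{\epsilon_1},h_{I_1}^{\epsilon_1'}$ noncancellative), the "outer" Haar function $h_{I_1^{(k)}}=h_{I_1}^1$ can be noncancellative, so $SS\,\Phi$ is not literally a square function of an $L^2$ function in the first slot. This is resolved exactly as in Lemma \ref{bipara}: when $k=0$ the corresponding factor is already in "averaged" form $|I_1|^{-1}\chi_{I_1}$ and is dominated directly by $M_1$ of the relevant coefficients, so no square function in $x_1$ is needed at all; when $k>0$ all first-parameter Haar functions are cancellative and the genuine square-function argument applies. In every subcase the estimate of $\Phi$ in product $H^1$ has no dependence on $k$ (the ancestor grouping $\{I_1:I_1^{(k)}=J_1\}$ is summed away before taking norms), so uniformity in $k$ — and, by the symmetric argument, in $l$ — is automatic. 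The main obstacle, then, is purely bookkeeping: correctly tracking which of the two positions in each parameter carries the noncancellative Haar in the $k=0$ (resp. $l=0$) case and choosing, in each of those subcases, between the maximal-function bound and the square-function bound so that the $SS$-characterization of product $H^1$ can still be applied.
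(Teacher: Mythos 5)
Your overall strategy matches the paper's: pair against $g$, invoke product $H^1$--$BMO$ duality with the double square function $SS$, exploit John--Nirenberg for $a^1$ (resp.\ $a^2$), and close with hybrid maximal--square function estimates and Fefferman--Stein. That is exactly how Lemma~\ref{biparaBP} is proved.

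However, there is a genuine conceptual error in your treatment of the noncancellative subcases, and it is not merely bookkeeping. You write that ``the `outer' Haar function $h_{I_1^{(k)}} = h_{I_1}^1$ can be noncancellative'' when $k=0$, so that $SS\Phi$ ``is not literally a square function of an $L^2$ function in the first slot.'' This is false. The Haar function paired with $b$, namely $h_{I_1^{(k)}}$, is \emph{always} cancellative --- it is the coefficient functional for the $BMO$ function and is never replaced by $h^1$. The hypothesis of the lemma allows noncancellativity only in (at most one of) $h_{I_1}^{\epsilon_1}$ or $h_{I_1}^{\epsilon_1'}$, i.e.\ the Haar paired with $f$ or the output Haar (hence the one paired with $g$ after duality), and only when $k=0$. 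Consequently $\Phi$ always has a genuine Haar expansion in each slot, $SS\Phi$ is perfectly well defined as a square function, and the noncancellativity surfaces instead in the coefficients: $\pair{f}{h_{I_1}^1\otimes\cdots}$ or $\pair{g}{h_{I_1}^1\otimes\cdots}$ become $|I_1|^{1/2}$ times local averages, which is where the maximal function takes over from the square function.

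This distinction matters because the two subcases require different estimates, which you do not separate. In the paper's proof the case $l=0$, $\epsilon_2=\vec{1}$ (noncancellative Haar on $f$) leads to $M_2(\pair{f}{h_{I_1}}_1)$ combined with a hybrid maximal--square function of $g$, while the case $l=0$, $\epsilon_2'=\vec{1}$ (noncancellative Haar on the output, hence on $g$) leads to $M_1(|M_2(g)|^p)^{1/p}$ combined with $SS(f)$. These are not interchangeable, and a correct writeup has to split accordingly. Your final sentence acknowledges one must ``track which of the two positions carries the noncancellative Haar,'' but your earlier, incorrect identification of the outer Haar as the noncancellative one would send you down the wrong path. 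Correct the identification of which Haar may be noncancellative, then split into the three subcases exactly as in Lemma~\ref{bipara} ($l>0$; $l=0$ with $\epsilon_2=\vec{1}$; $l=0$ with $\epsilon_2'=\vec{1}$), and the rest of your sketch goes through.
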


\begin{proof}
By symmetry, it suffices to estimate $PB_l$. The strategy is similar as before: a square function argument encoding the product BMO estimate of $b$, combined with a John-Nirenberg inequality taking advantage of the BMO estimate of $a^1$. Note that the arguments slightly vary depending on whether noncancellative Haar functions appear. Taking $g$ such that $\|g\|_{L^2}\leq 1$,
\[
\begin{split}
&\pair{PB_l(b,a^1,f)}{g}\\
=&\pair{b}{\sum_{I_1,I_2}\pair{f}{h_{I_1}\otimes u_{I_2}^{\epsilon_2}}|I_1|^{-1}|I_2^{(l)}|^{-1/2}h_{I_1}\otimes u_{I_2^{(l)}}\sum_{J_1: J_1\subsetneq I_1}\pair{a^1}{h_{J_1}}_1\pair{g}{h_{J_1}\otimes u_{I_2}^{\epsilon_2'}}}.
\end{split}
\]

A similar application of John-Nirenberg inequality as before implies that
\begin{equation}\label{square}
\begin{split}
&SS(\sum_{I_1,I_2}\pair{f}{h_{I_1}\otimes u_{I_2}^{\epsilon_2}}|I_1|^{-1}|I_2^{(l)}|^{-1/2}h_{I_1}\otimes u_{I_2^{(l)}}\sum_{J_1: J_1\subsetneq I_1}\pair{a^1}{h_{J_1}}_1\pair{g}{h_{J_1}\otimes u_{I_2}^{\epsilon_2'}})\\
\leq&\|a^1\|_{BMO}\left(\sum_{I_1,J_2}\big(\sum_{I_2\subset J_2}^{(l)}\pair{f}{h_{I_1}\otimes u_{I_2}^{\epsilon_2}}(\ave{|\pair{g}{u_{I_2}^{\epsilon_2'}}_2|^p}_{I_1})^{1/p}\big)^2\frac{\chi_{I_1}\otimes\chi_{J_2}}{|I_1||J_2|^2}\right)^{1/2}
\end{split}
\end{equation}

(a) Case $l>0$.

In this case, all the Haar functions that appear are cancellative, hence by omitting the dependence on $\epsilon_2,\epsilon_2'$ and applying Cauchy-Schwarz inequality, there holds
\[
\begin{split}
(\ref{square})&\leq\|a^1\|_{BMO}\left(\sum_{I_1,J_2}\big(\sum_{I_2\subset J_2}^{(l)}|\pair{f}{h_{I_1}\otimes u_{I_2}}|^2\big)\big(\sum_{I_2\subset J_2}^{(l)}(\ave{|\pair{g}{u_{I_2}}_2|^p}_{I_1})^{2/p}\big)\frac{\chi_{I_1}\otimes\chi_{J_2}}{|I_1||J_2|^2}\right)^{1/2}\\
&\leq\|a^1\|_{BMO}\left(\sum_{J_2}\big(\sum_{I_1}\sum_{I_2\subset J_2}^{(l)}|\pair{f}{h_{I_1}\otimes u_{I_2}}|^2\frac{\chi_{I_1}}{|I_1|}\big)\big(\sum_{I_2\subset J_2}^{(l)}M_1(|\pair{g}{u_{I_2}}_2|^p)^{2/p}\big)\frac{\chi_{J_2}}{|J_2|^2}\right)^{1/2},
\end{split}
\]
which by $\|\cdot\|_{\ell^2}\leq\|\cdot\|_{\ell^1}$ and another use of Cauchy-Schwarz is bounded by
\[
\|a^1\|_{BMO}\left(\sum_{I_1}\sum_{J_2}\sum_{I_2\subset J_2}^{(l)}|\pair{f}{h_{I_1}\otimes u_{I_2}}|^2\frac{\chi_{I_1}\otimes\chi_{J_2}}{|I_1||J_2|}\right)^{1/2}\left(\sum_{J_2}\sum_{I_2\subset J_2}^{(l)}M_1(|\pair{g}{u_{I_2}}_2|^p)^{2/p}\frac{\chi_{J_2}}{|J_2|}\right)^{1/2}.
\]

Therefore, a similar double square function and hybrid maximal-square function argument as in Lemma \ref{bipara} and Lemma \ref{biparaPP} implies that
\[
\|(\ref{square})\|_{L^1}\lesssim\|a^1\|_{BMO}\|f\|_{L^2}\|g\|_{L^2}.
\]

(b) Case $l=0$ and $\epsilon_2=\vec{1}$.

In this case, 
\[
\begin{split}
(\ref{square})&=\|a^1\|_{BMO}\left(\sum_{I_1,I_2}\big(\ave{\pair{f}{h_{I_1}}_1}_{I_2}\big)\big(\ave{|\pair{g}{u_{I_2}}_2|^p}_{I_1}\big)^{2/p}\frac{\chi_{I_1}\otimes\chi_{I_2}}{|I_1||I_2|}\right)^{1/2}\\
&\leq \left(\sum_{I_1}M_2(\pair{f}{h_{I_1}}_1)^2\frac{\chi_{I_1}}{|I_1|}\right)^{1/2}\left(\sum_{I_2}M_1(|\pair{g}{u_{I_2}}_2|^p)^{2/p}\frac{\chi_{I_2}}{|I_2|}\right)^{1/2},
\end{split}
\]
which shows that $\|(\ref{square})\|_{L^1}\lesssim\|a^1\|_{BMO}\|f\|_{L^2}\|g\|_{L^2}$.

(c) Case $l=0$ and $\epsilon_2'=\vec{1}$.

This last case can be dealt with similarly by noticing that
\[
\begin{split}
(\ref{square})&=\|a^1\|_{BMO}\left(\sum_{I_1,I_2}|\pair{f}{h_{I_1}\otimes u_{I_2}}|^2(\ave{|\ave{g}_{I_2}|^p}_{I_1})^{2/p}\frac{\chi_{I_1}\otimes\chi_{I_2}}{|I_1||I_2|}\right)^{1/2}\\
&\leq\|a^1\|_{BMO}\left(M_1(|M_2(g)|^p)\right)^{1/p}SS(f).
\end{split}
\]
The boundedness of $M_1$ and $M_2$ in each variable implies that
\[
\|\left(M_1(|M_2(g)|^p)\right)^{1/p}\|_{L^2}\lesssim \|g\|_{L^2}.
\]

To conclude, we've demonstrated in each case that
\[
\|PB_l(b,a^1,f)\|_{L^2}\lesssim\|b\|_{BMO_{prod}}\|a^1\|_{BMO}\|f\|_{L^2},
\]
which completes the proof.
\end{proof}

Now let's proceed with the proof of Theorem \ref{main}. Using Theorem \ref{repre} twice for both variables we have
\[
\begin{split}
&[[b,T_1],T_2]f\\
&=c\|T_1\|_{CZ}\|T_2\|_{CZ}\mathbb{E}_{\omega_1}\mathbb{E}_{\omega_2}\sum_{i_1,j_1=0}^\infty\sum_{i_2,j_2=0}^\infty 2^{-\max(i_1,j_1)\frac{\delta}{2}}2^{-\max(i_2,j_2)\frac{\delta}{2}}[[b,S^{i_1j_1}_{\omega_1}],S^{i_2j_2}_{\omega_2}]f.
\end{split}
\]
Since our estimate in the following doesn't depend on the parameters $\omega_1,\omega_2$ explicitly, we will omit them in the notation. Our goal is to prove that
\[
\begin{split}
&\|[[b,S^{i_1j_1}_1],S^{i_2j_2}_2]f\|_{L^2(\R^n\times\R^m)}\\
&\lesssim (1+\max(i_1,j_1))(1+\max(i_2,j_2))\|b\|_{BMO_{prod}(\R^n\times\R^m)}\|f\|_{L^2(\R^n\times\R^m)},
\end{split}
\]
which can be achieved by showing that any $[[b,S^{i_1j_1}_1],S^{i_2j_2}_2]f$ can be represented as a finite linear combination of the following terms and their adjoints (which is understood as the adjoint operator with $b, a^i$ fixed):
\[
B_{k,l}(b, S^{i_1j_1}_1S^{i_2j_2}_2f),\quad S^{i_1j_1}_1(B_{k,l}(b,S^{i_2j_2}_2f)),
\]
\[
BP_k(b,a^2,S^{i_1j_1}_1f),\quad PB_l(b,a^1,S^{i_2j_2}_2f), 
\]
\[
PP(b,a^1\otimes a^2,f),\quad PP_1(b, a^1\otimes a^2,f).
\]
where $k,l\geq 0$, and $a^i$ is the BMO symbol of the dyadic shift $S^{00}$ if it appears in the $i$-th variable. The total number of terms in the representation is no greater than $C(1+\max(i_1,j_1))(1+\max(i_2,j_2))$ for some universal constant $C$. Note that for $a^1\in BMO(\mathbb{R}^n)$ and $a^2\in BMO(\mathbb{R}^m)$, there holds $a^1\otimes a^2\in BMO_{prod}(\mathbb{R}^n\times\mathbb{R}^m)$. Hence, implied by Theorem \ref{para}, Lemma \ref{bipara}, Lemma \ref{biparaPP} and Lemma \ref{biparaBP}, the $L^2$ norm of all of the terms above are uniformly bounded, independent of $k,l$ in particular.

To derive the desired representation, we argue by an iteration of Theorem \ref{decomp}. 

\subsection{Cancellative dyadic shifts $S^{i_1j_1}_1$ and $S^{i_2j_2}_2$}\label{cancan}

In the case when both $S^{i_1j_1}_1$ and $S^{i_2j_2}_2$ are cancellative, only operators $B_{k,l}$ need to be involved. In order to make the notations clear, in the following, we will use $B_k^\tau$ to denote the one-parameter paraproducts that appeared in the previous section for the $\tau$-th variable, where $k\geq 0$ and $\tau=1,2$. Calculation shows that
\[
[[b, S_1^{i_1j_1}], S_2^{i_2j_2}]f=\sum_{I_1,J_1}\sum_{I_2,J_2}\pair{b}{h_{I_1}\otimes u_{I_2}}\pair{f}{h_{J_1}\otimes u_{J_2}}[h_{I_1},S_1^{i_1j_1}]h_{J_1}\otimes[u_{I_2},S_2^{i_2j_2}]u_{J_2},
\]
which by iteration equals
\[
\begin{split}
\sum_{I_2,J_2}\Big(&\sum_{t_1\in\Lambda_1}B_{k,t_1}^1(\pair{b}{u_{I_2}}_2,S^{i_1j_1}_1(\pair{f}{u_{J_2}}_2))\\
&+\sum_{t_2\in\Lambda_2}S^{i_1j_1}_1(B_{k,t_2}^1(\pair{b}{u_{I_2}}_2,\pair{f}{u_{J_2}}_2))\Big)\otimes\left([u_{I_2},S^{i_2j_2}_2]u_{J_2}\right),
\end{split}
\]
where $B_{k,t_i}^1$ are paraproducts of type $B_k^1$ in the first variable, and for each $t_i$, $k$ is an arbitrary nonnegative integer. Note that in the first parentheses we have a finite linear combination of terms that have already been studied in the previous section, and all of the index set $\Lambda_i$ satisfy $|\Lambda_i|\leq C(1+\max(i_1,j_1))$, $i=1,2$. Since the terms inside the first parentheses can be treated similarly, let's study one of the terms $B^1_{k,t_1}$ as an example. We will also omit the subscript $t_1$ as the choice is arbitrary. Then, the sum corresponding to $B_k^1$ is equal to
\[
\begin{split}
&\sum_{I_2,J_2}B_k^1(\pair{b}{u_{I_2}}_2,S_1^{i_1j_1}(\pair{f}{u_{J_2}}_2))\otimes\left([u_{I_2},S^{i_2j_2}_2]u_{J_2}\right)\\
&=\sum_{I_2,J_2}\sum_{I_1}\beta_{I_1}\pair{b}{h_{I_1^{(k)}}\otimes u_{I_2}}\pair{S_1^{i_1j_1}(f)}{h_{I_1}^{\epsilon_1}\otimes u_{J_2}}h_{I_1}^{\epsilon_1'}|I_1^{k}|^{-1/2}\otimes \left([u_{I_2},S^{i_2j_2}_2]u_{J_2}\right)\\
&=\sum_{I_1}\beta_{I_1}h_{I_1}^{\epsilon_1'}|I_1^{(k)}|^{-1/2}\otimes \left([\pair{b}{h_{I_1^{(k)}}}_1,S^{i_2j_2}_2]\pair{S_1^{i_1j_1}f}{h_{I_1}^{\epsilon_1}}_1\right)\\
&=\sum_{I_1}\beta_{I_1}h_{I_1}^{\epsilon_1'}|I_1^{(k)}|^{-1/2}\otimes \Big(\sum_{s_1\in\Gamma_1}B_{l,s_1}^2(\pair{b}{h_{I_1^{(k)}}}_1,S^{i_2j_2}_2(\pair{S_1^{i_1j_1}(f)}{h_{I_1}^{\epsilon_1}}_1))\\
&\qquad +\sum_{s_2\in\Gamma_2}S^{i_2j_2}_2(B_{l,s_2}^2(\pair{b}{h_{I_1^{(k)}}}_1,\pair{S_1^{i_1j_1}(f)}{h_{I_1}^{\epsilon_1}}_1))\Big),
\end{split}
\]
where $B_{l,s_i}^2$ are paraproducts of type $B_l^2$ in the second variable, and all the index sets $\Gamma_i$ satisfy $|\Gamma_i|\leq C(1+\max(i_2,j_2))$, $i=1,2$. Again, since all the terms in the parentheses are similar, we only consider one of $B^2_{l,s_2}$ and omit the subscript $s_2$. This is a mixed case, and all the other combinations follow similarly. Thus, noticing that
\[
\begin{split}
&\sum_{I_1}\beta_{I_1}h_{I_1}^{\epsilon_1'}|I_1^{(k)}|^{-1/2}\otimes S_2^{i_2j_2}(B_l^2(\pair{b}{h_{I_1^{(k)}}}_1,\pair{S_1^{i_1j_1}(f)}{h_{I_1}^{\epsilon_1}}_1))\\
&=S_2^{i_2j_2}\left(\sum_{I_1,I_2}\beta_{I_1}\beta_{I_2}\pair{b}{h_{I_1^{(k)}}\otimes u_{I_2^{(l)}}}\pair{S_1^{i_1j_1}f}{h_{I_1}^{\epsilon_1}\otimes u_{I_2}^{\epsilon_2}}h_{I_1}^{\epsilon_1'}\otimes u_{I_2}^{\epsilon_2'}|I_1^{(k)}|^{-\frac{1}{2}}|I_2^{(l)}|^{-\frac{1}{2}}\right)
\end{split}
\]
is exactly $S_2^{i_2j_2}(B_{k,l}(b,S_1^{i_1j_1}f))$, where $B_{k,l}$ is the bi-parameter paraproduct we've studied in Lemma \ref{bipara}, and the only case involving noncancellative Haar functions is when the corresponding $k$ or $l$ is $0$. We therefore obtain the desired representation of this term. All the other terms can be treated similarly, by noticing that paraproducts $B_{k,l}$ can be obtained by combining $B_k^1$ and $B_l^2$ through the same process described above. And it is easily seen that the total number of terms is bounded by $(1+\max(i_1,j_1))(1+\max(i_2,j_2))$ up to a dimensional constant. 

\subsection{Cancellative dyadic shift $S^{i_1j_1}_1$ and noncancellative dyadic shift $S_2^{00}$}\label{cannon}

We assume that $S^{00}_2f=\sum_{I_2}\pair{a^2}{u_{I_2}}_2|I_2|^{-1/2}\pair{f}{u_{I_2}^1}_2u_{I_2}$. Following from Theorem \ref{decomp}, in the first variable, the commutator can be represented as a linear combination of paraproducts, i.e.
\[
\begin{split}
[[b,S^{i_1j_1}_1],S^{00}_2]f&=\sum_{I_1\subset J_1^{(i_1)}}\sum_{I_2\subset J_2}\pair{b}{h_{I_1}\otimes u_{I_2}}\pair{f}{h_{J_1}\otimes u_{J_2}}[h_{I_1},S^{i_1j_1}_1]h_{J_1}\otimes [u_{I_2},S^{00}_2]u_{J_2}\\
&=\sum_{I_2\subset J_2}\big(\sum_{t_1\in\Lambda_1}B_{k,t_1}^1(\pair{b}{u_{I_2}}_2,S^{i_1j_1}_1(\pair{f}{u_{J_2}}_2))\\
&\qquad+\sum_{t_2\in\Lambda_2}S^{i_1j_1}_1(B_{k,t_2}^1(\pair{b}{u_{I_2}}_2,\pair{f}{u_{J_2}}_2))\Big)\otimes\left([u_{I_2},S^{00}_2]u_{J_2}\right).
\end{split}
\]

Recall that by Theorem \ref{decomp}, in the one-parameter setting, the noncancellative dyadic shift $S^{00}$ can be represented as a finite linear combination of paraproducts (corresponding to the sum over $I\subsetneq J$ and the second term in the sum over $I=J$) and operator $P$ (corresponding to the first term in the sum over $I=J$). Hence, 
\[
\begin{split}
&\sum_{I_2\subset J_2}B_{k,t_1}^1(\pair{b}{u_{I_2}}_2,S^{i_1j_1}_1(\pair{f}{u_{J_2}}_2))\otimes [u_{I_2},S^{00}_2]u_{J_2}\\
=&\sum_{I_1}\beta_{I_1}h_{I_1}^{\epsilon_1'}|I_1^{(k)}|^{-1/2}\otimes\big([\pair{b}{h_{I_1^{(k)}}}_1,S^{00}_2]\pair{S^{i_1j_1}_1f}{h_{I_1}^{\epsilon_1}}_1\big)\\
=&\sum_{I_1}\beta_{I_1}h_{I_1}^{\epsilon_1'}|I_1^{(k)}|^{-1/2}\otimes\big(\sum_{s_1\in\Gamma_1}B_{0,s_1}^2(\pair{b}{h_{I_1^{(k)}}}_1,S^{00}_2(\pair{S^{i_1j_1}_1f}{h_{I_1}^{\epsilon_1}}_1))\\
&\qquad +\sum_{s_2\in\Gamma_2}S^{00}_2(B_{0,s_2}^2(\pair{b}{h_{I_1^{(k)}}}_1,\pair{S^{i_1j_1}_1f}{h_{I_1}^{\epsilon_1}}_1))+P(\pair{b}{h_{I_1^{(k)}}}_1,a^2,\pair{S^{i_1j_1}_1f}{h_{I_1}^{\epsilon_1}}_1)\\
=&\big(\sum_{s_1\in\Gamma_1}B_{k,0,s_1}(b,S^{i_1j_1}_1S^{00}_2f)\big)+\big(\sum_{s_2\in\Gamma_2}S^{00}_2(B_{k,0,s_2}(b,S^{i_1j_1}_1f))\big)+BP_k(b,a^2,S^{i_1j_1}_1f).
\end{split}
\]

Similarly, the other term can be treated exactly the same:
\[
\begin{split}
&\sum_{I_2\subset J_2}S^{i_1j_1}_1(B_{k,t_2}^1(\pair{b}{u_{I_2}}_2,\pair{f}{u_{J_2}}_2))\otimes [u_{I_2},S^{00}_2]u_{J_2}\\
=&\big(\sum_{s_1\in\Gamma_1}S^{i_1j_1}_1(B_{k,0,s_1}(b,S^{00}_2f))\big)+\big(\sum_{s_2\in\Gamma_2}S^{i_1j_1}_1S^{00}_2(B_{k,0,s_2}(b,f))\big)+S^{i_1j_1}_1(BP_k(b,a^2,f)).
\end{split}
\]

The desired representation is hence obtained. Note that by symmetry and duality, this implies the boundedness of other types of the mixed cases as well.

\subsection{Noncancellative dyadic shfits $S_1^{00}$ and $S_2^{00}$}

Write
\[
[[b,S^{00}_1],S^{00}_2]f=\sum_{I_1\subset J_1}\sum_{I_2\subset J_2}\pair{b}{h_{I_1}\otimes u_{I_2}}\pair{f}{h_{J_1}\otimes u_{J_2}}[h_{I_1},S^{00}_1]u_{J_1}\otimes [h_{I_2},S^{00}_2]u_{J_2}.
\]

First, we deal with the case when both $S^{00}_1$ and $S^{00}_2$ are of the same type, for instance,
\[
S^{00}_1f:=\sum_{I_1}\pair{a^1}{h_{I_1}}_1|I_1|^{-1/2}\pair{f}{h_{I_1}^1}h_{I_1},\quad S^{00}_2f:=\sum_{I_2}\pair{a^2}{u_{I_2}}_2|I_2|^{-1/2}\pair{f}{u_{I_2}^1}_2u_{I_2}.
\]

Observe that compared with section \ref{cancan} and \ref{cannon}, after decomposing the commutator in each variable into paraproducts and operator $P$, the only new case that arises here is the "tensor product" of operator $P$ in both variables, which is equal to
\[
\begin{split}
&\sum_{I_1,I_2}\pair{b}{h_{I_1}\otimes u_{I_2}}\pair{f}{h_{I_1}\otimes u_{I_2}}|I_1|^{-1}|I_2|^{-1}\sum_{J_1: J_1\subsetneq I_1}\sum_{J_2: J_2\subsetneq I_2}\pair{a^1\otimes a^2}{h_{J_1}\otimes u_{J_2}}h_{J_1}\otimes u_{J_2}\\
=&PP(b,a^1\otimes a^2,f).
\end{split}
\]

Second, we discuss the case when $S^{00}_1$ and $S^{00}_2$ are of different types, for instance,
\[
S^{00}_1f:=\sum_{I_1}\pair{a^1}{h_{I_1}}_1|I_1|^{-1/2}\pair{f}{h_{I_1}}h_{I_1}^1,\quad S^{00}_2f:=\sum_{I_2}\pair{a^2}{u_{I_2}}_2|I_2|^{-1/2}\pair{f}{u_{I_2}^1}_2u_{I_2}.
\]

It is implied by Theorem \ref{decomp} that in the first variable, the commutator is a linear combination of paraproducts and operator $P^*$. Therefore, the only new case that arises here in the representation is $P^*$ in the first variable mixed with $P$ in the second variable, which is
\[
\begin{split}
&\sum_{I_1,I_2}\pair{b}{h_{I_1}\otimes u_{I_2}}|I_1|^{-1}|I_2|^{-1}\sum_{J_1: J_1\subsetneq I_1}\sum_{J_2: J_2\subsetneq I_2}\pair{a^1\otimes a^2}{h_{J_1}\otimes u_{J_2}}\pair{f}{h_{J_1}\otimes u_{I_2}}h_{I_1}\otimes u_{J_2}\\
=&PP_1(b,a^1\otimes a^2,f).
\end{split}
\]

Hence the main theorem in the bi-parameter setting is proved. As a final remark, the proof in the multi-parameter setting proceeds exactly the same as this one. Clearly, in the desired representation of commutators with dyadic shifts, one needs to involve a larger number of basic operators which mix together $B_k$ and $P$ in each variable, but the uniform boundedness of such operators can all be obtained similarly as in Lemma \ref{bipara}, \ref{biparaPP} and \ref{biparaBP}.


\begin{thebibliography}{10}

\bibitem{CF}S-Y. A. Chang and R. Fefferman, \emph{A continuous version of duality of $H^1$ with BMO on the bidisc}, Ann. of Math. 2nd Ser. \textbf{112} (1980), no.~1, 179-201.

\bibitem{CRW}R. Coifman, R. Rochberg and G. Weiss, \emph{Factorization theorems for Hardy spaces in several variables}, Ann. of Math. (2) \textbf{103} (1976), no.~3, 611-635.

\bibitem{DP}L. Dalenc and S. Petermichl, \emph{A lower bound criterion for iterated commutators}, J. Funct. Anal. \textbf{266}(8) (2014),  5300-5320.

\bibitem{FS}S. Ferguson and C. Sadosky, \emph{Characterizations of bounded mean oscillation on the polydisk in terms of Hankel operators and Carleson measures}, J. Anal. Math. \textbf{81} (2000), 239-267.

\bibitem{Hy}T. Hyt\"onen, \emph{Representation of singular integrals by dyadic operators, and the $A_2$ theorem}, preprint (2011).

\bibitem{Hy2}T. Hyt\"onen, \emph{The sharp weighted bound for general Calder\'on-Zygmund operators}, Ann. of Math. (2) \textbf{175} (2012), no. ~3, 1473-1506.

\bibitem{Jo}J. Journ\'e, \emph{Calder\'on-Zygmund operators on product spaces}, Rev. Mat. Iberoamericana \textbf{1} (1985), no.~3, 55-91.

\bibitem{LF}S. Ferguson and M. Lacey, \emph{A characterization of product BMO by commutators}, Acta Math. \textbf{189} (2002), no.~2, 143-160. 

\bibitem{LPPW}M. Lacey, S. Petermichl, J. Pipher and B. Wick, \emph{Multiparameter Riesz commutators}, Amer. J. Math. \textbf{35} (2009), no.~1, 159-183.

\bibitem{LPPW2}M. Lacey, S. Petermichl, J. Pipher and B. Wick, \emph{Iterated Riesz commutators: a simple proof of boundedness}, Contemp. Math., 505, Amer. Math. Soc., Providence, RI (2010), 171-178.

\bibitem{LT}M. Lacey and E. Terwilleger, \emph{Hankel operators in several complex variables and product BMO}, Houston J. Math. \textbf{35} (2009), no.~1, 159-183.

\bibitem{Ma}H. Martikainen, \emph{Representation of bi-parameter singular integrals by dyadic operators}, Adv. Math. \textbf{229}(3) (2012), 1734-1761.

\bibitem{MO}H. Martikainen and T. Orponen, \emph{Some obstacles in characterising the boundedness of bi-parameter singular integrals}, preprint (2014).
    

\bibitem{MPTT}C. Muscalu, J. Pipher, T. Tao and C. Thiele, \emph{Bi-parameter paraproducts}, Acta Math. \textbf{193} (2004), no.~2, 269-296.

\bibitem{MPTT2}C. Muscalu, J. Pipher, T. Tao and C. Thiele, \emph{Multi-parameter paraproducts}, Rev. Mat. Iberoamericana \textbf{22} (2006), no.~3, 963-976.

\bibitem{PV} S.~Pott and P.~Villarroya, \emph{A $T(1)$ theorem on product spaces}, preprint (2013).


\end{thebibliography}
\end{document}